\title{Symplectic log Calabi-Yau surface---deformation class}
\newtheorem{thm}{Theorem}[section]
\newtheorem{prop}[thm]{Proposition}
\newtheorem{lemma}[thm]{Lemma}
\newtheorem{defn}[thm]{Definition}
\newtheorem{eg}[thm]{Example}
\begin{document}

\author{Tian-Jun Li and Cheuk Yu Mak
}
\AtEndDocument{\bigskip{\footnotesize
  \textsc{School of Mathematics, University of Minnesota, Minneapolis, MN 55455} \par
  \textit{E-mail address}, Tian-Jun Li: \texttt{tjli@math.umn.edu} \par
  \addvspace{\medskipamount}
  \textsc{School of Mathematics, University of Minnesota, Minneapolis, MN 55455} \par
  \textit{E-mail address}, Cheuk Yu Mak: \texttt{makxx041@math.umn.edu} \par
}}
\date{October 20, 2015}
\maketitle
\begin{center}
{\it Dedicated to Professor Shing-Tung Yau on the occasion of his 65th birthday}
\end{center}

\vspace{2em}

\begin{abstract}
We study the symplectic analogue of log Calabi-Yau surfaces
and show that the symplectic deformation classes of these surfaces are completely determined by the homological information.
\end{abstract}

\tableofcontents

\section{Introduction}
In \cite{Au07} and \cite{GrHaKe11}, Auroux and Gross-Hacking-Keel proposed a way to interpret mirror symmetry for Looijenga pair $(X,D)$, where $X$ is a smooth projective surface over $\mathbb{C}$ and $D$ is an effective reduced anti-canonical divisor on $X$ with maximal boundary.
Under mirror symmetry, certain symplectic invariants of $X-D$ are conjectured to be related to holomorphic invariants of its mirror.
In this regard, Pascaleff showed in \cite{Pa13} that the symplectic cohomology of $X-D$ is, as a vector space, isomorphic to the global sections of the structure sheaf of its mirror.
A step towards a deeper understanding of mirror symmetry for Looijenga pairs would be to classify them.
The moduli spaces of such pairs were studied by Looijenga in \cite{Lo81} and Gross-Hacking-Keel in \cite{GrHaKe12}.
Friedman gave an excellent survery in \cite{Fr15}.
Since one direction of mirror symmetry concerns about the symplectic invariants of $X-D$ instead of the holomorphic invariants,
we would like to establish, in this paper, a classification for `symplectic log Calabi-Yau surfaces' (including `symplectic Looijenga pairs' as a special case).
From symplectic point of view, we have the following definition of log Calabi-Yau surfaces.

For a connected closed symplectic 4 dimensional manifold $(X, \omega)$, which we assume throughout the whole paper, a {\bf symplectic divisor} $D$ is a
connected configuration of finitely many closed embedded symplectic surfaces (called irreducible components) $D=C_1 \cup \dots \cup C_k$.
$D$ is further required to have the following two properties:
No three different $C_i$ intersect at a point and any intersection between two irreducible components is transversal and positive.
The orientation of each $C_i$ is chosen to be positive with respect to $\omega$.


\begin{defn} A {\bf symplectic log Calabi-Yau surface} $(X,D,\omega)$ is a closed symplectic real dimension
four manifold $(X,\omega)$ together with a symplectic divisor $D$ representing the homology class of the Poincare dual of $c_1(X, \omega)$.

A  symplectic Looijenga pair $(X,D,\omega)$ is a symplectic log Calabi-Yau surface such that each irreducible component of $D$ is a sphere.
\end{defn}

Let $(X,D,\omega)$ be a symplectic log Calabi-Yau surface. By Theorem A of \cite{Liu96} or \cite{OhOn96}
and the adjunction formula,  it is easy to show  (Lemma \ref{lemma: genus 1})
that  $X$ is uniruled with base genus $0$ or $1$, and
 $D$ is a torus or a cycle of spheres.
And if $(X,D,\omega)$ is a symplectic Looijenga pair then $X$ is rational.




Similar to studying the moduli space under complex deformation in the complex category, we would like to classify symplectic log Calabi-Yau surfaces up to symplectic deformation equivalence.

\begin{defn} \label{} A  {\bf symplectic homotopy} (resp. {\bf symplectic isotopy}) of $(X, D, \omega)$  is a smooth one-parameter family of symplectic divisors  $(X, D_t,\omega_t)$ with
$(X, D_0,\omega_0)=(X, D, \omega)$
(resp. such that in addition $\omega_t=\omega$ for all $t$).
 $(X',D',\omega')$ is  said to be  {\bf symplectic deformation equivalent} to $(X,D,\omega)$ if
 it is symplectomorphic to $(X, D_1,\omega_1)$ for some symplectic homotopy $(X, D_t,\omega_t)$ of $(X, D, \omega)$.
 The symplectic deformation equivalence is called {\bf strict} if  the symplectic homotopy is a
 symplectic isotopy.
\end{defn}

\begin{defn}  Two symplectic log Calabi-Yau surfaces $(X^i,D^i,\omega^i)$ for $i=1,2$ are said to be {\bf homological equivalent} if there is a
diffeomorphsim $\Phi: X^1 \to X^2$ such that $\Phi_*[C^1_j]=[C^2_j]$ for all $j=1,\dots,k$.
The homological equivalence is called {\bf strict} if  $\Phi^*[\omega^2]=[\omega^1]$.
We call $\Phi$ a (strict) homological equivalence.
\end{defn}

Here is the main result of this paper.

\begin{thm}\label{thm: symplectic deformation class=homology classes}
Let $(X^i,D^i,\omega^i)$ be symplectic log Calabi-Yau surfaces for $i=1,2$.
Then $(X^1,D^1,\omega^1)$ is (resp. strictly) symplectic deformation equivalent to $(X^2,D^2,\omega^2)$ if and only if they are (resp. strictly) homological equivalent.

Moreover, the symplectomorphism in the (resp. strict) symplectic deformation equivalence has same homological effect as the (resp. strict) homological equivalence.
\end{thm}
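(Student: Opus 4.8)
The plan is to prove the two implications separately; essentially all of the content lies in the passage from homological to symplectic deformation equivalence. The forward implication is immediate from the definitions. If $(X^1,D^1,\omega^1)$ is symplectic deformation equivalent to $(X^2,D^2,\omega^2)$, then by definition there is a symplectic homotopy $(X^2,D^2_t,\omega^2_t)$ and a symplectomorphism $\psi$ from $(X^1,\omega^1)$ onto $(X^2,\omega^2_1)$ carrying $D^1$ to $D^2_1$ componentwise. Since the classes $[C^2_{j,t}]$ are locally constant along the continuous family $D^2_t$, the diffeomorphism $\psi$ satisfies $\psi_*[C^1_j]=[C^2_j]$ and is thus a homological equivalence; if the homotopy is an isotopy then $\omega^2_t\equiv\omega^2$, so $\psi^*[\omega^2]=[\omega^1]$ and the equivalence is strict. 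This records the homological effect, proving the ``moreover'' clause in this direction.

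For the converse I would first transport everything to a single manifold. Given the homological equivalence $\Phi\colon X^1\to X^2$, set $X:=X^1$, $(D,\omega):=(D^1,\omega^1)$, and $(D',\omega'):=(\Phi^{-1}(D^2),\Phi^*\omega^2)$; then $(D',\omega')$ is a symplectic divisor on $X$ with $[C'_j]=[C_j]$ for every $j$, and $\Phi^*[\omega^2]=[\omega]$ in the strict case. It therefore suffices to show that two symplectic divisors on $X$ whose corresponding components carry the same homology classes are related by a symplectomorphism $g$ of $X$, isotopic to the identity (hence homologically trivial), together with a symplectic homotopy connecting $g^*\omega'$ to $\omega$; in the strict case the homotopy should be an isotopy. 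Composing such a $g$ with $\Phi$ produces a symplectomorphism with the same homological effect as $\Phi$, yielding the ``moreover'' clause here as well.

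I would next reduce the general case to the strict one. By the structure result (Lemma \ref{lemma: genus 1}), $X$ is a blow-up of a minimal rational or ruled surface and $D'$ is a torus or a cycle of spheres. Choosing an $\omega'$-compatible almost complex structure making $D'$ holomorphic and invoking Taubes--Gromov theory, the fiber class of the ruling, the exceptional classes, and the components of $D'$ are represented by embedded pseudoholomorphic, hence symplectic, curves. Applying the Lalonde--McDuff inflation procedure along these curves deforms $\omega'$ through symplectic forms for which $D'$ stays symplectic at every stage, until $[\omega']=[\omega]$; this packages the non-strict case into the strict case.

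In the strict case I would combine McDuff's uniqueness theorem---symplectic forms in a fixed cohomology class on a rational or ruled $4$-manifold are symplectomorphic and deformation equivalent within that class---with a normalization of the divisor. The heart of the argument, and the main obstacle, is to show that two symplectic divisors with identical componentwise homology classes are symplectically isotopic. I would argue by induction on the number of blow-ups: pick a compatible $J$ making $D$ holomorphic, locate the $J$-holomorphic exceptional spheres together with their intersections with $D$, and symplectically blow them down to descend to a minimal model, where the torus and cycle-of-spheres configurations are rigid by Gromov's and McDuff's isotopy results for low-genus holomorphic curves in rational and ruled surfaces. Reversing the blow-ups at matching intersection points reconstructs the isotopy upstairs, and the symplectic neighborhood theorem for divisors guarantees that once the configurations agree the standard neighborhoods glue to a global symplectomorphism. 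The delicate points, which I expect to dominate the work, are controlling the exceptional spheres so that the blow-downs are compatible with $D$, and treating the genus-$1$ (irrational ruled) case, where the base torus obstructs a purely rational reduction and forces a separate analysis of the ruling relative to $D$.
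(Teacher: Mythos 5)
Your outline follows the same skeleton as the paper---blow down to a minimal model, prove rigidity there, and lift back up---but it glosses over precisely the two steps where the real work lies, and both are genuine gaps. First, the reduction to a minimal model is not simply ``locate the $J$-holomorphic exceptional spheres and blow them down'': an exceptional class can only be blown down compatibly with $D$ if it is non-toric (meets one component once) or toric (is itself a component meeting two others once each), and a general exceptional class need not be of either type. The paper's Lemma \ref{lemma: fundamental theorem + minimal model} must order the operations (all non-toric blow-downs first, then toric ones) and show via a minimal-area/adjunction argument that every remaining exceptional class is represented by a component of $D$; even then the process can terminate at $\mathbb{C}P^2\#\overline{\mathbb{C}P^2}$ with a bigon whose exceptional component meets only one other component, which is why Definition \ref{defn: minimal model} admits this as a minimal model. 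Your ``induction on the number of blow-ups'' would stall at exactly such configurations. Second, and more seriously, the claim that ``reversing the blow-ups at matching intersection points reconstructs the isotopy upstairs'' is the crux of the theorem and is not supplied by the symplectic neighborhood theorem alone: a deformation equivalence of the blown-down divisors carries no memory of where the blow-up centers sat or of the Darboux charts used, and one must show that the equivalence can be upgraded to carry a coherent family of standard balls at the marked points before blowing back up. This is the content of the paper's marked-divisor formalism, in particular Lemma \ref{lem: D-deformation=strong D-deformation} (which requires killing a loop of tangent $2$-planes at each intersection point via McLean's interpolation trick), Lemma \ref{lem: marking adding/shrinking} (independence of the position of a smooth marked point), and Proposition \ref{prop: marked blowdown}. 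Without some version of this machinery the lifting step fails.

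Two further points are under-argued rather than wrong. The rigidity of \emph{configurations} in the minimal models is not available off the shelf from Gromov/McDuff/Siebert--Tian, which treat single curves; the paper's Proposition \ref{prop: isotopy class in minimal model} requires fixing the negative component by a Hamiltonian diffeomorphism and then a case-by-case index count showing all degenerations of the remaining components occur in codimension at least two, plus the separate Usher/Thurston-trick argument (Lemma \ref{lem: Thurston trick}) for the elliptic ruled case. And your proposed reduction of the non-strict case to the strict case by inflating $[\omega']$ to $[\omega]$ on the non-minimal $X$ reverses the paper's logic (which matches cohomology only at the minimal level and upgrades to the strict case at the very end via Theorem 1.2.12 and Corollary 1.2.13 of McDuff--Opshtein); your ordering is plausible given singular inflation relative to $D$, but you would need to justify that $[\omega]-[\omega']$ can be realized by inflation along curves meeting $D'$ admissibly, which is not automatic.
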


We remark that when $D$ is a smooth divisor, the relative Kodaira dimension $\kappa(X, D, \omega)$
 was
introduced in \cite{LiZh11} and it was noted there that this notion  could be
extended to nodal divisors. With this extension understood, symplectic Calabi-Yau surfaces
have relative Kodaira dimension $\kappa=0$  (cf. Theorem 3.28 in \cite{LiZh11}).
Moreover,
Theorem \ref{thm: symplectic deformation class=homology classes} is also valid when   $\kappa(X, D, \omega)=-\infty$.  This will be treated in the sequel.
Coupled with the techniques developed in \cite{LiMa14}, \cite{LiMaYa14}, some applications to Stein fillings will also be treated in the sequel.

The paper is organized as follows.
In   Section \ref{section: Classification of Symplectic Log Calabi-Yau surfaces}
we introduce marked divisors and establish the invariance of their deformation class
under blow-up/down in Proposition \ref{prop: marked blowdown}. This reduces
Theorem \ref{thm: symplectic deformation class=homology classes} to the minimal cases.
In Section \ref{section: Reduction to minimal models}, we classify the deformation classes of minimal models and finish the proof of Theorem \ref{thm: symplectic deformation class=homology classes}.


The authors benefit from discussions with Mark Gross, Paul Hacking and Sean Keel.
Both authors are supported by NSF-grants DMS 1065927 and 1207037.

\section{Symplectic deformation equivalence of marked divisors}\label{section: Classification of Symplectic Log Calabi-Yau surfaces}

We study the symplectic deformation equivalence property in a general setting,
which was  initiated  by Ohta and Ono in  \cite{OhOn05}. Here we provide details using the notion of marked
divisor,
which encodes the blow-down information.
We will  show that the deformation class of marked symplectic divisors is stable under various operations.

\subsection{Homotopy and blow-up/down  of symplectic divisors}

\subsubsection{Homotopy}
Parallel to the  two types of homotopy of a symplectic divisor $(X,D,\omega)$ mentioned in the introduction,

$\bullet$ Symplectic isotopy $(X,D_t,\omega)$, and

$\bullet$ Symplectic homotopy $(X,D_t,\omega_t)$.\\
We also consider the more restrictive homotopies keeping $D$ fixed:

$\bullet$ $D-$symplectic isotopy $(X,D,\omega_t)$ with constant $[\omega_t]$, and

$\bullet$ $D-$symplectic homotopy $(X,D,\omega_t)$

To compare these notions we introduce the following terminology.

 \begin{defn} Two {\it symplectic homotopies} are said to be symplectomorphic  if they are
related by a one parameter family of symplectomorphisms.
\end{defn}

\begin{lemma}\label{lem: def versus marked def}
A symplectic homotopy (resp. isotopy) of a symplectic divisor is symplectomorphic
to a $D-$symplectic homotopy (resp. isotopy) and vice versa.

\end{lemma}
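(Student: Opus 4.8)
The plan is to build the required one-parameter families of symplectomorphisms out of two classical ingredients: Thom's isotopy extension theorem, used to straighten a moving divisor into a fixed one, and Moser's stability theorem, used to straighten a moving but cohomologically constant symplectic form into a fixed one.

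I would first treat the passage from a symplectic homotopy to a $D$-symplectic homotopy. A symplectic homotopy $(X,D_t,\omega_t)$ carries with it a smooth ambient isotopy of the configuration $D=D_0$, so the isotopy extension theorem provides a smooth family of diffeomorphisms $\phi_t\colon X\to X$ with $\phi_0=\mathrm{id}$ and $\phi_t(D)=D_t$. Setting $\widetilde\omega_t:=\phi_t^*\omega_t$, each $\widetilde\omega_t$ is symplectic and makes the fixed divisor $D=\phi_t^{-1}(D_t)$ symplectic, so $(X,D,\widetilde\omega_t)$ is a $D$-symplectic homotopy; and by construction $\phi_t\colon(X,\widetilde\omega_t)\to(X,\omega_t)$ is a symplectomorphism with $\phi_t(D)=D_t$, which is exactly a one-parameter family of symplectomorphisms relating the two homotopies. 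In the isotopy sub-case $\omega_t\equiv\omega$, and since $\phi_t$ is isotopic to the identity it acts trivially on $H^2(X;\mathbb{R})$; hence $[\widetilde\omega_t]=[\omega]$ is constant and $(X,D,\widetilde\omega_t)$ is a bona fide $D$-symplectic isotopy.

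For the converse, a $D$-symplectic homotopy is literally a symplectic homotopy (take the divisor constant), so only the isotopy case has content. Given a $D$-symplectic isotopy $(X,D,\omega_t)$ with $[\omega_t]$ constant, Moser's stability theorem on the closed manifold $X$ yields a smooth family $\psi_t$ with $\psi_0=\mathrm{id}$ and $\psi_t^*\omega_t=\omega_0$. Then $D_t:=\psi_t^{-1}(D)$ is $\omega_0$-symplectic for every $t$, since $\psi_t\colon(X,\omega_0)\to(X,\omega_t)$ is a symplectomorphism; thus $(X,D_t,\omega_0)$ is a symplectic isotopy, and the same family $\psi_t$ exhibits it as symplectomorphic to the given $D$-symplectic isotopy.

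I expect the isotopy extension step to be the main obstacle, because $D_t$ is not a single embedded submanifold but a configuration of symplectic surfaces meeting transversally, with no triple points and positive intersections. I would therefore invoke isotopy extension in its stratified (Thom--Mather) form, applied to the stratification of $D$ by the open parts of the components together with the crossing points: after modelling each node on a standard pair of coordinate planes, one extends the local isotopies near the $0$-dimensional stratum first and then across the $2$-dimensional strata, gluing by a partition of unity. One must finally check that the ambient family $\phi_t$ preserves the defining properties of a symplectic divisor---transversality and positivity of the intersections together with the prescribed orientations---which holds because $\phi_t$ is isotopic to the identity, hence orientation-preserving, and transversality is an open condition stable under the isotopy.
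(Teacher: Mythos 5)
Your proposal is correct and follows essentially the same route as the paper: the isotopy extension theorem (justified by transversality and the absence of triple points) followed by pulling back $\omega_t$ in one direction, and Moser stability in the other. You supply slightly more detail than the paper does on the stratified extension near the nodes and on why $[\phi_t^*\omega]$ stays constant, but the argument is the same.
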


\begin{proof}
A $D-$symplectic homotopy is a symplectic homotopy by definition, and  by Moser lemma
a $D-$symplectic isotopy is symplectomorphic to a symplectic isotopy.

On the other hand,  a symplectic homotopy $(X,D_t,\omega_t)$ gives rise to  a smooth isotopy
$\Phi:D\times [0, 1]\to X$.
Since the intersections of $D$ are transversal and no three of the components intersect at a common point,
we can apply the smooth isotopy extension theorem to extend $\Phi$ to a smooth  ambient
 isotopy
$\Phi=\{\Phi_t\}: X \times [0,1] \to X$.
Then we get a $D-$symplectic homotopy $(X,D,\Phi_t^*\omega_t)$ which is symplectomorphic to $(X,D_t,\omega_t)$ via the one parameter  family of symplectomorphisms $\{\Phi_t\}$.
Similarly, a symplectic isotopy is symplectomorphic to a $D-$symplectic isotopy.

\end{proof}

Lemma \ref{lem: def versus marked def} converts the effect of a symplectic isotopy (resp. homotopy) to a $D-$symplectic isotopy (resp. homotopy).
This  simple observation  will be repeatedly used.

\subsubsection{Toric and non-toric blow-up/down}

Throughout the  paper, we use the following terminology for symplectic blow-up/down of $D\subset (X, \omega)$.

A {\bf toric blow-up} (resp. {\bf non-toric blow-up}) of $D$ is the total (resp. proper)
transform of a symplectic blow-up centered at an intersection point (resp. at a smooth point) of $D$.

Here, for blow-up at a smooth point $p$ on the divisor $D$, it means that we first do a $C^0$ small perturbation of $D$ to $D'$ fixing $p$ and
then we do a symplectic blow-up of a ball centered at $p$ such that $D'$ coincide, in the local coordinates given by the ball,
with a complex subspace.
Similarly, for blow-up at an intersection point, a $C^0$ small perturbation is performed so that $D'$ is $\omega$-orthogonal at $p$
and $D'$ coincide, in the local coordinates given by the ball, with two complex subspaces.

To describe the corresponding blow-down operations, recall that a symplectic sphere with self-intersection $-1$ is called an exceptional sphere.
The homology class of an exceptional sphere is called an exceptional class.

A {\bf toric blow-down} refers to blowing down an exceptional sphere  contained in
$D$ that intersects exactly two other irreducible components and exactly once for each of them.
Moreover, we require that  the intersections are positive and transversal.
Such an exceptional sphere is called a toric exceptional sphere.

A {\bf non-toric blow-down} refers to blowing down an exceptional sphere not contained in $D$ that intersects exactly one irreducible component of $D$
and exactly once with the
intersection being positive and transversal.
Such an  exceptional sphere is called a  non-toric exceptional sphere.

More precisely, for blow-down of a toric or non-toric exceptional sphere $E$,
we first perturb our symplectic divisor $D$ to another symplectic divisor $D'$ (or perturbing $E$) such that the intersections of $D'$ and $E$
are $\omega$-orthogonal
(In the case that $E$ is an irreducible component of $D$, we require $E$ has $\omega$-orthogonal intersections with all other irreducible components).
Then, we will do the symplectic blow-down of $E$ and $D'$ will descend to a symplectic divisor.




\begin{defn}
An exceptional class $e$ is called {\bf non-toric} if $e$ has trivial intersection pairing with all but one of the
homology classes of the irreducible components of $D$ and the only non-trivial pairing is $1$.

An exceptional class $e$ is called {\bf toric} if $e$ is homologous to an irreducible component of $D$
such that $e$ pairs non-trivially with the classes of exactly two other irreducible components of $D$ and these two pairings are $1$.
\end{defn}

Clearly, the homology class of a toric (non-toric) exceptional sphere is a toric (non-toric) exceptional class.  Conversely, we have the following observations.

For a toric exceptional class $e$,  the component of $D$
with class $e$ is obviously a toric exceptional sphere in the class $e$.
For a non-toric exceptional class $e$, we also have an exceptional sphere in the class $e$, at least
when $D$ is $\omega-$orthogonal.
\begin{lemma}\label{lem: existence of nice J-sphere}(cf. Theorem 1.2.7 of \cite{McOp13})
 Let $D$ be an $\omega$-orthogonal symplectic divisor.
 There is a non-empty subspace $\mathcal{J}(D)$ of the space of $\omega$-tamed almost complex structure making $D$ pseudo-holomorphic such that
 for any non-toric exceptional class $e$, there is a residue subset $\mathcal{J}(D,e) \subset \mathcal{J}(D)$
 so that $e$ has an embedded $J$-holomorphic representative for all $J\in \mathcal{J}(D,e)$.
\end{lemma}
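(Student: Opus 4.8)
The plan is to treat the two assertions separately: first construct the constrained space $\mathcal{J}(D)$, and only afterwards establish the genericity statement for each non-toric class $e$, deferring the genuinely hard part — existence of the curve — to the end. For the first assertion I would build $\mathcal{J}(D)$ by a local-to-global argument exploiting the $\omega$-orthogonality hypothesis. Near a smooth point of a single component $C_i$ the form is standard in a Darboux chart in which $C_i$ is a coordinate plane, so the standard integrable $J_0$ tames (indeed is compatible with) $\omega$ and makes $C_i$ holomorphic; at an intersection point of $C_i$ and $C_j$, $\omega$-orthogonality lets me choose a Darboux chart in which \emph{both} branches are complex coordinate planes simultaneously, so the standard structure again works locally. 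Patching these local models against an arbitrary $\omega$-compatible $J$ away from $D$ with a partition of unity, and using that the compatible (or tamed) condition is convex and hence preserved under such interpolation, produces a nonempty $\mathcal{J}(D)$. The same convexity shows $\mathcal{J}(D)$ is path-connected and, after a suitable $C^\ell$ or Floer-type completion, carries the structure of a separable Banach manifold, which is what the Sard--Smale machinery requires.

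For the second assertion I would run the universal moduli space argument restricted to $\mathcal{J}(D)$. Fix a non-toric exceptional class $e$, so that $e\cdot[C_i]=\delta_{ii_0}$ for a single component $C_{i_0}$ and $e\cdot[D]=1$. By positivity of intersections any $J$-holomorphic representative with $J\in\mathcal{J}(D)$ meets $D$ transversally in exactly one point; in particular it is \emph{not} contained in $D$ and is not a multiple cover of a component, hence it is somewhere injective with an injective point lying off $D$. At such a point $J$ may be perturbed freely within $\mathcal{J}(D)$, and this freedom is exactly what is needed to show that the universal moduli space of simple curves in class $e$ is a Banach manifold; Sard--Smale then yields a residual subset $\mathcal{J}(D,e)\subset\mathcal{J}(D)$ over which the moduli space is cut out transversally. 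Since $e\cdot e=-1$ and $c_1\cdot e=1$, the adjunction formula forces any transverse representative to be an embedded sphere, and the index computation makes the corresponding moduli space zero-dimensional.

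It remains to prove existence, i.e.\ that this transverse moduli space is nonempty for $J\in\mathcal{J}(D,e)$, and this is where I expect the real work to lie. My plan is to start from a generic \emph{unconstrained} $J_0$, for which the nonvanishing of the Gromov--Taubes invariant of the exceptional class $e$ guarantees an embedded representative, connect $J_0$ to a given $J\in\mathcal{J}(D,e)$ by a path inside the space of tamed structures, and apply Gromov compactness along the path. The main obstacle is to control the limiting stable map: a priori the class $e$ could degenerate into a broken configuration containing components of $D$. Here the non-toric hypothesis is decisive — because $e$ pairs trivially with every component except $C_{i_0}$, positivity of intersections severely restricts which broken configurations can appear, and an analysis of the possible bubble trees using $e\cdot e=-1$, $c_1\cdot e=1$, together with the energy bound from $\omega(e)$, rules out every degeneration except a single embedded sphere in class $e$. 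This degeneration analysis is precisely the content of Theorem 1.2.7 of \cite{McOp13}; once it is in place, the embedded representative persists for every $J\in\mathcal{J}(D,e)$, completing the proof.
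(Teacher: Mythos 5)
Your proposal is correct and ultimately takes the same route as the paper: the paper's entire proof consists of checking that a non-toric exceptional class satisfies the ``$D$-good'' condition of Definition 1.2.4 in \cite{McOp13} (which is exactly the intersection data $e\cdot[C_i]=\delta_{ii_0}$, $e^2=-1$, $c_1\cdot e=1$ that you invoke) and then citing Theorem 1.2.7 of \cite{McOp13}, which packages the construction of $\mathcal{J}(D)$, the transversality statement, and the degeneration analysis you correctly identify as the hard step. Your first two paragraphs reconstruct standard machinery that the cited theorem already supplies, and your final step defers to the same reference, so the two arguments coincide in substance.
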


\begin{proof}
 It is immediate to prove that $e$ is $D$-good in the sense of Definition $1.2.4$ in \cite{McOp13} if $e$ is non-toric.
 Theorem $1.2.7$ of \cite{McOp13} then implies the result.
\end{proof}

\subsection{Deformation of marked divisors}\label{subsection: deformation problem for non-basic}


When we blow down an exceptional sphere, we  encode the process  by marking the descended symplectic divisor.

\begin{defn}
 A  marked symplectic divisor consists of a five-tuple
 $$\Theta=(X,D,\{p_j\}_{j=1}^l,\omega,\{I_j\}_{j=1}^l)$$
 such that


 $\bullet$ $D\subset (X, \omega)$ is a symplectic divisor,

 $\bullet$ $p_j$,  called centers of marking,
 are points on $D$ (intersection points of $D$ allowed),

 $\bullet$ $I_j: (B(\delta_j),\omega_{std}) \to (X,\omega)$, called   coordinates of  marking,
 are symplectic embeddings sending the origin  to $p_j$ and with $I_j^{-1}(D)=\{x_1=y_1=0\} \cap B(\delta_j)$
 (resp. $I_j^{-1}(D)=(\{x_1=y_1=0\} \cup \{x_2=y_2=0\})\cap B(\delta_j)$)
 if $p_j$ is a smooth (resp. an intersection) point of $D$.
  Moreover, we require that the image of $I_j$ are disjoint.
\end{defn}

If $p_j$ is an intersection point of $D$, then we define the symplectic embedding $I_j^{re}=I_j \circ re $,
where $re(x_1,y_1,x_2,y_2)=(-x_2,-y_2,x_1,y_1)$ interchanges the two subspaces $\{x_1=y_1=0\}$ and $\{x_2=y_2=0\}$.
If $p_j$ is a smooth point of $D$, then we define $I_j^{re}=I_j$.
For simplicity, we denote a marked symplectic divisor as $(X,D,p_j,\omega,I_j)$ or $\Theta$ and also call it a marked divisor if no confusion would arise.

\begin{defn}\label{defn: marked symplectic deformation}
Let $\Theta=(X,D,p_j,\omega,I_j)$ be a marked divisor.
A {\bf $D-$symplectic homotopy} (resp. {\bf $D-$symplectic isotopy}) of $\Theta$ is a 4-tuple $(X, D, p_j,\omega_t)$ such that
$\omega_t$ is a smooth family  of symplectic forms (resp. cohomologous symplectic forms) on $X$
with $\omega_0=\omega$ and
$D$ being $\omega_t$-symplectic  for all $t$.


If $\Theta^2=(X^2,D^2,p^2_j,\omega^2,I^2_j)$ is another marked symplectic divisor and
there is a symplectomorphism sending
the 4-tuple $(X^2,D^2,p^2_j,\omega^2)$ to a 4-tuple $(X,D,p_j,\omega_1)$ which is symplectic homotopic (isotopic) to $\Theta$, then we
say that $\Theta$ and $\Theta^2$
are {\bf $D-$symplectic deformation equivalent} (resp. {\bf strict $D-$symplectic deformation equivalent}).

\end{defn}

A symplectic divisor can be viewed as a marked divisor with empty markings.

\begin{lemma}\label{lem: as a marked divisor}
 Two symplectic divisors are
(strict) deformation equivalent if and only if they are (strict) D-deformation equivalent as marked symplectic divisor.
\end{lemma}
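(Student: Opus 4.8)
The plan is to deduce the statement directly from Lemma~\ref{lem: def versus marked def}, viewing each symplectic divisor $(X^i,D^i,\omega^i)$ as the marked divisor $\Theta^i=(X^i,D^i,\omega^i)$ with empty set of markings. The two notions being compared differ only in which part of the data is allowed to move along the path: a symplectic homotopy $(X,D_t,\omega_t)$ moves the divisor, whereas a $D$-symplectic homotopy $(X,D,\omega_t)$ keeps the divisor fixed and moves the form. Lemma~\ref{lem: def versus marked def} supplies, for any symplectic homotopy (resp. isotopy) $(X^1,D^1_t,\omega^1_t)$ of $(X^1,D^1,\omega^1)$, a one-parameter family of diffeomorphisms $\{\Phi_t\}$ with $\Phi_0=\mathrm{id}$ and $\Phi_t(D^1)=D^1_t$, such that $(X^1,D^1,\Phi_t^*\omega^1_t)$ is a $D$-symplectic homotopy (resp. isotopy) symplectomorphic to $(X^1,D^1_t,\omega^1_t)$ via $\{\Phi_t\}$, and conversely. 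I would simply track the endpoints and the defining symplectomorphisms through this correspondence.

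For the direction ``deformation equivalent $\Rightarrow$ $D$-deformation equivalent'', suppose $(X^2,D^2,\omega^2)$ is symplectomorphic, via some $\psi$, to the endpoint $(X^1,D^1_1,\omega^1_1)$ of a symplectic homotopy of $(X^1,D^1,\omega^1)$. Converting that homotopy as above produces a $D$-symplectic homotopy $(X^1,D^1,\Phi_t^*\omega^1_t)$ of $\Theta^1$, whose endpoint $(X^1,D^1,\Phi_1^*\omega^1_1)$ is symplectomorphic to $(X^1,D^1_1,\omega^1_1)$ via $\Phi_1$. Then $\Phi_1^{-1}\circ\psi$ is a symplectomorphism carrying $(X^2,D^2,\omega^2)$ onto the $4$-tuple $(X^1,D^1,\Phi_1^*\omega^1_1)$, which is $D$-symplectic homotopic to $\Theta^1$; this is exactly the content of Definition~\ref{defn: marked symplectic deformation} for empty markings.

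For the converse, a $D$-symplectic homotopy $(X^1,D^1,\omega^1_t)$ is literally a symplectic homotopy with constant divisor $D^1_t\equiv D^1$, so a witness of $D$-deformation equivalence is already a witness of deformation equivalence with the same defining symplectomorphism. In the strict setting one must instead start from a $D$-symplectic isotopy, i.e. a path of cohomologous forms with $D^1$ fixed; applying the Moser-type conversion of Lemma~\ref{lem: def versus marked def} turns it into a genuine symplectic isotopy (constant form, moving divisor) with a symplectomorphic endpoint, which yields strict deformation equivalence after the same composition as above.

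The only real point requiring care, and the one I would dwell on, is the strict (cohomological) bookkeeping. In the forward direction one needs the converted path $(X^1,D^1,\Phi_t^*\omega^1)$ to remain a $D$-symplectic \emph{isotopy}, i.e. to have constant cohomology class; this holds because $\{\Phi_t\}$ is based at the identity and hence acts trivially on $H^2(X^1;\mathbb{R})$, so $[\Phi_t^*\omega^1]=[\omega^1]$ throughout. In the converse direction the Moser isotopy produced from a path of cohomologous forms likewise preserves the class. Both observations are already packaged into the ``isotopy'' half of Lemma~\ref{lem: def versus marked def}, so once the endpoints and compositions are matched the strict statement follows verbatim from the non-strict one, and no genuinely new difficulty arises.
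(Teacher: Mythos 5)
Your proposal is correct and follows essentially the same route as the paper: both reduce the statement to Lemma~\ref{lem: def versus marked def} and then match up the endpoint symplectomorphisms by composing with $\Phi_1$ (the paper's ``pre-compose'' step is exactly your $\Phi_1^{-1}\circ\psi$). Your extra care with the cohomological bookkeeping in the strict case is a sound elaboration of what the paper leaves implicit in the ``isotopy'' half of that lemma.
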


\begin{proof}
It follows directly from Lemma \ref{lem: def versus marked def}.
To obtain a (strict) D-symplectic deformation equivalence from a (strict) symplectic deformation equivalence, we just have to pre-compose
the symplectomorphism from $(X, D, \Phi_1^*\omega_1)$ to $(X, D_1, \omega_1)$.
The other direction is similar.
\end{proof}

For marked divisors, both $D-$symplectic deformation equivalence and its strict version  do not involve  the symplectic embeddings $I_j$.
We have the following seemingly stronger definition of deformation.

\begin{defn}
Let $\Theta=(X,D,p_j,\omega,I_j)$ be a marked divisor.
A {\bf strong $D-$symplectic homotopy} (resp. {\bf strong $D-$symplectic isotopy}) of $\Theta$ is a 5-tuple $(X, D, p_j,\omega_t,I_{j,t})$ such that

$\bullet$ the 4-tuple $(X, D, p_j,\omega_t)$ is a $D-$symplectic homotopy (resp. isotopy) of $\Theta$,

$\bullet$ $D$ is $\omega_t$-orthogonal, and

$\bullet$ $I_{j,t}: B(\epsilon_j) \to (X,\omega_t)$ are symplectic embedding sending the origin to $p_j$, $I_{j,0}=I_j|_{B(\epsilon_j)}$ and
$(I_{j,t})^{-1}(D) = \{x_1=y_1=0\} \cap B(\epsilon_j)$
(resp. $(I_{j,t})^{-1}(D) = (\{x_1=y_1=0\} \cup \{x_2=y_2=0\}) \cap B(\epsilon_j)$)
if $p_j$ is a smooth point (resp. $p_j$ is an intersection point), for some $\epsilon_j < \delta_j$.


If $\Theta^2=(X^2,D^2,p^2_j,\omega^2,I^2_j)$ is another marked sympelctic divisor and
there is a symplectomorphism sending
$(X^2,D^2,p^2_j,\omega^2,(I^2_j)^\#)$ to $(X,D,p_j,\omega_1,I_{j,1})$, where $(I^2_j)^\#$ is the unique choice between $I^2_j$
and $(I^2_j)^{re}$ such that the symplectomorphism is possible, then we
say that $\Theta$ and $\Theta^2$
are {\bf strong $D-$symplectic deformation equivalent} (resp. {\bf strong strict $D-$symplectic deformation equivalent}).
\end{defn}



\begin{lemma}\label{lem: D-deformation=strong D-deformation}
If $\Theta=(X,D,\{p_j\}_{j=1}^l,\omega,\{I_j\}_{j=1}^l)$ and
$\Theta^2=(X^2,D^2,\{p^2_j\}_{j=1}^l,\omega^2,\{I^2_j\}_{j=1}^l)$ are (strict) $D-$symplectic deformation equivalent,
then they are strong (strict) $D-$symplectic deformation equivalent.
\end{lemma}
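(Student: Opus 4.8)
The plan is to begin with a (strict) $D$-symplectic homotopy $(X,D,p_j,\omega_t)$ realizing the weak equivalence, together with the symplectomorphism $\phi\colon(X^2,D^2,p^2_j,\omega^2)\to(X,D,p_j,\omega_1)$ at its endpoint, and to upgrade this to the strong data in two stages: first secure $\omega_t$-orthogonality of $D$ for all $t$, then manufacture the smooth family of standard charts $I_{j,t}$ and match them to $\phi$ at $t=1$.

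For the first stage, note that $D$ is orthogonal at both ends: at $t=0$ because $\omega_0=\omega$ and we take the marked divisors to have $\omega$-orthogonal $D$, and at $t=1$ because $\phi$ is a symplectomorphism carrying the $\omega^2$-orthogonal $D^2$ onto $D$. Orthogonality can fail only at the unmarked intersection points for intermediate $t$. There the two branches are $\omega_t$-symplectic and meet transversally and positively, so a canonical linear symplectic orthogonalization, performed fibrewise and cut off in a small neighborhood disjoint from the markings, furnishes a smooth family of diffeomorphisms $h_t$ preserving $D$ with $h_0=h_1=\mathrm{id}$ and $D$ being $h_t^*\omega_t$-orthogonal. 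Replacing $\omega_t$ by $h_t^*\omega_t$ preserves the homotopy (and its cohomology class in the strict case), fixes both endpoints, and leaves the charts $I_j$ untouched at $t=0$.

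The core of the argument is the construction of $I_{j,t}$. Near each $p_j$ the forms $\omega_t$ constitute a family of germs of symplectic forms, all making $D$ orthogonal and all agreeing with $I_{j,*}\omega_{std}$ at $t=0$. A parametric Moser argument relative to the divisor---equivalently, a family version of the symplectic neighborhood theorem keeping $D$ fixed---yields germs of diffeomorphisms $\Psi_{j,t}$ fixing $p_j$, preserving $D$, with $\Psi_{j,0}=\mathrm{id}$ and $\Psi_{j,t}^*\omega_t=\omega_0$. Setting $I_{j,t}:=\Psi_{j,t}\circ I_j|_{B(\epsilon_j)}$ then produces $\omega_t$-symplectic embeddings with $I_{j,t}^{-1}(D)$ standard, $I_{j,t}(0)=p_j$, and $I_{j,0}=I_j$, i.e. a strong $D$-symplectic homotopy from $\Theta$ to $(X,D,p_j,\omega_1,I_{j,1})$.

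Finally I would match the charts at $t=1$. Both $I_{j,1}$ and the pushforward under $\phi$ of $I^2_j$ (or of $(I^2_j)^{re}$) are standard charts for $\omega_1$ near $p_j$, hence differ by a germ $g_j$ of symplectomorphism of $(B(\epsilon_j),\omega_{std})$ fixing the origin and preserving the standard divisor. Selecting $(I^2_j)^\#$ chooses the branch labeling for which $g_j$ preserves each branch rather than interchanging them; since the group of such branch-preserving standard symplectomorphisms is connected (it deformation retracts onto a product of unitary groups), $g_j$ lies in its identity component. One can then join $I_{j,1}$ to $\phi_*(I^2_j)^\#$ through standard charts, with $\omega_1$ held fixed and support near $p_j$, and append this path to the strong homotopy, after which $\phi$ realizes the required strong (strict) $D$-symplectic deformation equivalence. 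The main obstacle is the relative parametric Moser step together with this endpoint matching: one must carry the neighborhood identifications smoothly in $t$ while preserving $D$, and the $re$-ambiguity built into the definition of strong equivalence is precisely what is needed to place $g_j$ in the identity component so that the final matching isotopy exists.
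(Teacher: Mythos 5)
Your endgame (relative parametric Moser to produce the charts $I_{j,t}$, then matching $I_{j,1}$ to $\phi\circ(I^2_j)^{\#}$ using connectedness of the group of divisor\-/preserving symplectic germs, which retracts onto $Sp(2,\mathbb R)\times Sp(2,\mathbb R)$) is sound and is genuinely different from the paper's route: the paper instead builds a family of Darboux balls $\Phi_t$ by an ordinary Moser argument, kills the loop $M_t$ of tangent\-/plane pairs by pre-concatenating with its inverse, and then straightens the family of local divisors $F_t=\Phi_t^{-1}(D)$ rel endpoints via McLean's Lemma 5.10, converting the result back into a new family of forms by an ambient isotopy. Your version, if it worked, would be cleaner. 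But it does not close, because of your first stage.

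The gap is the orthogonalization step. You ask for a family of diffeomorphisms $h_t$ \emph{preserving $D$}, with $h_0=h_1=\mathrm{id}$, such that $D$ is $h_t^*\omega_t$-orthogonal. This is impossible unless $D$ was already $\omega_t$-orthogonal: at an intersection point $q$ with branch tangents $T_1,T_2\subset T_qX$, any diffeomorphism preserving $D$ fixes $q$ and maps each $T_i$ to itself (or swaps them), so $(h_t^*\omega_t)(v,w)=\omega_t(dh_t v,dh_t w)$ vanishes for all $v\in T_1$, $w\in T_2$ if and only if $\omega_t(T_1,T_2)=0$ already. Orthogonality at an intersection point is an invariant of the pair $(D,\omega_t)$ under $D$-preserving pullback; to change it one must move $D$ itself (a $t$-dependent Gompf-type perturbation of one branch toward $(T_1)^{\omega_t}$, performed rel $t\in\{0,1\}$, and then converted back by an ambient isotopy that does \emph{not} preserve $D$ along the way) --- which is exactly the content of the paper's appeal to Lemma 5.10 of \cite{McL12}. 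The problem is compounded by your cutting the orthogonalization off ``disjoint from the markings'': it is precisely at a marked intersection point $p_j$ that orthogonality for every intermediate $t$ is indispensable, since a chart $I_{j,t}$ with $I_{j,t}^{-1}(D)=(\{x_1=y_1=0\}\cup\{x_2=y_2=0\})\cap B(\epsilon_j)$ forces the two branches to be $\omega_t$-orthogonal at $p_j$; for any $t$ at which they are not, the chart you want from the relative Moser argument in your second stage simply does not exist. So as written the proof fails at marked intersection points; the repair requires the family orthogonalization that moves $D$, at all intersection points including the marked ones, before your stages two and three can run.
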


\begin{proof}

We will only do the case when $l=1$. It can be done similarly for general $l$.
We denote $p_1$ as $p$, $I_1$ as $I$ and $I^2_1$ as $I^2$.

By assumption, there is a $D-$symplectic homotopy $(X,D, p,\omega_t)$ of $\Theta$
such that there is a symplectomorphism sending $(X,D, p,\omega_1)$ to $(X^2,D^2,p^2_1,\omega^2)$.
Therefore, without loss of generality, we can assume $(X,D, p,\omega_1)=(X^2,D^2,p^2_1,\omega^2)$.

The proof is easier when $p$ is a smooth point of $D$ so we only prove the case when $p$ is an intersection point of $D$.
Moreover, by possibly replacing $I^2$ with $(I^2)^{re}$, we can assume the irreducible component of $D$ corresponding to $\{x_1=y_1=0\}$ in chart $I$ is the same
as that of $I^2$.

The idea of the proof goes as follows.
First, we find a smooth family of symplectic embeddings of small ball $\Phi_t: (B(\delta),\omega_{std}) \to (X,\omega_t)$ sending the origin to $p$ such that
$\Phi_0=I|_{B(\delta)}$ and $\Phi_1=I^2|_{B(\delta)}$.
Then, we find another family of symplectic forms $\omega_t'$ such that the 4-tuple
$(X,D, p,\omega'_t)$ is still a $D-$symplectic homotopy of $\Theta$ with $\omega'_1=\omega_1$ and $D$ is $\omega'_t$-orthogonal for all $t$.
A corresponding symplectic embeddings  $I_{t}'$ for $(X,D, p,\omega'_t)$ will be constructed based on $\Phi_t$
such that the 5-tuple $(X,D, p,\omega'_t,I_{t}')$ is a strong $D-$sympelctic homotopy between $\Theta$ and $\Theta^2$
and this will finish the proof.

We begin our construction of $\Phi_t$.
By the one-parameter family version of Moser lemma,
there exist a sufficiently small $\epsilon > 0$ and a smooth family of symplectic embeddings $\Phi=\{ \Phi_{t}\}: (B(\epsilon),\omega_{std}) \to (X,\omega_t)$
sending the origin to $p$ for all $t \in [0,1]$.
Moreover, $\Phi_{0}$ can be chosen to coincide with $I|_{B(\epsilon)}$.
This is not yet the $\Phi_t$ we want.

Notice that $\Phi_1$ is a symplectic embedding of $(B(\epsilon),\omega_{std})$ to $(X,\omega_1)$ sending the origin to $p$
and so is $I^2|_{B(\epsilon)}$.
By possibly choosing a smaller $\epsilon$,
there is a symplectic isotopy of embeddings from $\Phi_1$ to $I^2|_{B(\epsilon)}$ sending the
origin to $p$ for all time, by the trick in Exercise 7.22 of \cite{McSa98}
(This is the trick to prove the space of symplectic embeddings of small balls is connected).
By smoothing the concatenation of $\Phi_t$ with this symplectic isotopy, we can assume that $\Phi_1=I^2|_{B(\epsilon)}$.

We need to further modify $\Phi_t$ by another concatenation. We consider the family of local divisors
Let $F_t=\Phi_t^{-1}(D)$ in the standard coordinates in $(B(\epsilon),\omega_{std})$.
Let $M_t$ be the ordered 2-tuple of the symplectic tangent spaces to the two branches of $F_t$ at the origin.
Since $\Phi_0=I|_{B(\epsilon)}$ and $\Phi_1=I^2|_{B(\epsilon)}$, $M_t$ is a loop.
Let $-M_t$ be the inverse loop of $M_t$ in the space of ordered 2-tuples of positively transversal intersecting two dimensional symplectic vector subspaces.
We can find an isotopy of symplectic embeddings $\Psi_t$ from  $\Phi_1$ to $\Phi_1$ in $(X,\omega_1)$ such that the
corresponding ordered 2-tuple of the symplectic tangent spaces of $\Psi_t^{-1}(D)$ at the origin is $-M_t$.
By concatenating $\Phi_t$ with $\Psi_t$, we can assume at the beginning that the $\Phi_t$ we chose is such that $M_t$ is null-homotopic.
This is the $\Phi_t$ we want which gives a nice family of Darboux balls in $(X,\omega_t)$.

To construct $\omega'_t$, we will isotope the one parameter family of local divisors $F_t$ (fixing both ends) to another one parameter family of symplectic divisors $F_{1, t}$ such that it coincides with $F_0=F_1$
near the origin for all $t$.
First, we perform a one-parameter family of $C^1$ small perturbations to make $F_t$ coincide with a symplectic vector subspace in a sufficiently small ball
$(B(\epsilon_2),\omega_{std})$, where $\epsilon_2 < \epsilon$.
In other words, $F_t$ coincides with $M_t$ in $B(\epsilon_2)$.
Since $M_t$ is null-homotopic, there is a homotopy $W_{r,t}$ between $M_t$ ($r=0$) and the constant path $M_0=M_1$ ($r=1$) such that $W_{r,0}=W_{r,1}=M_0$ for all $r$.
Hence, we can perform a one-parameter family of Lemma 5.10 of \cite{McL12} (See its proof)
to obtain a 3-parameter family of submanifolds $U_{r,s,t}$ in $B(\epsilon_2)$
such that $U_{r,s,t}=W_{s,t}$ outside a fixed small compact set containing the origin, $U_{r,s,t}=W_{r,t}$ close to the origin and $U_{r,r,t}=W_{r,t}$.
As in the proof of Lemma 5.10 of \cite{McL12}, from $U_{r,s,t}$ one can construct
an $s-$parameter  of symplectic isotopy $F_{s,t} \subset B(\epsilon_2)$
such that

$\bullet$ $F_{0,t}=F_t$,

$\bullet$ $F_{s,t}$ is a pair of symplectic submanifolds positively intersecting at the origin for all $s,t \in [0,1]$,

$\bullet$ $F_{1,t}=F_0=F_1=M_0=M_1$ inside $B(\epsilon_4)$ for all $t$,

$\bullet$ $F_{s,0}=F_{s,1}=F_0=F_1$, and

$\bullet$ the isotopy is supported inside $B(\epsilon_3)$, \\
where $0 < \epsilon_4 < \epsilon_3 < \epsilon_2$.

Due to the last bullet, we obtain a $2-$parameter family of marked divisors $D_{s, t}$ with $D_{0, t}=D_t,  D_{s, 0}=D_{s,1}=D$,
and satisfying the  bullets 2 and 3 above  near the marked point (recall we assume there is only one marking for simplicity).

The effect of the symplectic isotopy from $D_t$ ($s=0$) to $D_{1,t}$ ($s=1$) can be converted through symplectomorphism,
as in Lemma \ref{lem: def versus marked def},
to replace $(X,D, p,\omega_t)$ ($s=0$) by an another $D-$symplectic homotopy $(X,D, p,\omega'_t)$ ($s=1$).
More precisely,
 for the 1-parameter family of isotopy $D_{s,t}$ parameterized by $t$,
we can find a 1-parameter family of ambient isotopy $\Delta= \{ \Delta_s \}_{t \in [0,1]}=\{ \Delta_{s,t} \}$,
$\Delta_{s,t}:X \to X$ extending the 1-parameter family of isotopy $D_{s,t}$ (in particular, for fixed $t_0$, $\Delta_{s,t_0}$ is an ambient isotopy extension of $D_{s,t_0}$)
such that
$\Delta_{0,t}=\Delta_{s,0}=\Delta_{s,1}=Id_X$.
Then we define $\omega'_t = \Delta_{1,t}^* \omega_t$.

By construction, we have

$\bullet$ $\omega'_i=\omega_i$ for $i=0,1$,

$\bullet$ $D$ is positively $\omega'_t$-orthogonal for all $t$

$\bullet$ there is a family of symplectic embedding $\Phi_t': B(\epsilon_4) \to (X,\omega_t)$ such that $\Phi_t'^{-1}(D)=F_0$ for all $t$, and

$\bullet$ $\Phi_0'=I|_{B(\epsilon_4)}$ and $\Phi_1'=I^2|_{B(\epsilon_4)}$

In particular, if we let $I_{t}'=\Phi_t'$, then $(X,D, p,\omega'_t,I_{t}')$ is a strong $D-$symplectic homotopy between $\Theta$ and $\Theta^2$.
The strict version follows similarly.
\end{proof}

The ultimate goal for this section is the following proposition, which will be proved after discussing various operations for marked divisors in the next subsection.

\begin{prop} \label{prop: marked blowdown}
Let $\Theta=(X,D,p_j,\omega,I_j)$ and $\Theta^2=(X^2,D^2,p^2_j,\omega^2,I^2_j)$ be two marked divisors both with $l$ marked points.

(i) Up to moving inside the $D-$symplectic deformation class, we
 can blow down a toric or non-toric exceptional class in $\Theta$ (and $\Theta^2$) to obtain a marked divisor $\hat{\Theta}$ (resp. $\hat{\Theta}^2$) with
 an extra marked point (For toric exceptional class, original marked points on the exceptional sphere will be removed after blow-down).

(ii) Moreover, if  the   blow down divisors $\hat{\Theta}$ and $\hat{\Theta}^2$  are $D-$symplectic deformation equivalent such that the extra
marked points correspond to each other in the equivalence, then
$\Theta$ and $\Theta^2$ are $D-$symplectic deformation equivalent.



\end{prop}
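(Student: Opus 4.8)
The plan is to handle the two parts separately, treating (i) as a constructibility/well-definedness statement and (ii) as the genuine content. For part (i), the goal is to show that blowing down a toric or non-toric exceptional class can always be arranged, after a $D$-symplectic deformation, so that the resulting object is a bona fide marked divisor. First I would invoke Lemma \ref{lem: D-deformation=strong D-deformation} to upgrade the given $D$-symplectic deformation class to a \emph{strong} one, so that I may assume $D$ is $\omega$-orthogonal and that a smooth family of Darboux charts $I_{j,t}$ is available throughout. For a toric exceptional class $e$, the corresponding irreducible component is itself a toric exceptional sphere (as observed just before Lemma \ref{lem: existence of nice J-sphere}), so I can directly perform the toric blow-down described in Section \ref{section: Classification of Symplectic Log Calabi-Yau surfaces}: perturb $D$ to be $\omega$-orthogonal along $E$, blow down $E$, and record the blow-down point as the new marking. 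Any old markings lying on $E$ disappear (this is the parenthetical clause in the statement), while markings off $E$ are transported by the blow-down map. For a non-toric class $e$, I would apply Lemma \ref{lem: existence of nice J-sphere} to produce, for a generic $J \in \mathcal{J}(D,e)$, an embedded $J$-holomorphic exceptional sphere $E$ in class $e$; positivity of intersections guarantees $E$ meets exactly one component of $D$ once, so $E$ is a non-toric exceptional sphere and the non-toric blow-down applies. In both cases the new marked point is precisely the image of $E$ under the blow-down, equipped with the induced Darboux chart.

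The real work is part (ii), where I want to reverse the process: given that the blown-down marked divisors $\hat\Theta$ and $\hat\Theta^2$ are $D$-symplectically deformation equivalent via an equivalence matching the extra marked points, I must conclude the same for $\Theta$ and $\Theta^2$. The key principle is that symplectic blow-up is an operation depending only on a Darboux chart (equivalently, on a symplectically embedded ball) at the marked point, so a $D$-symplectic deformation of $\hat\Theta$ that respects the extra marking can be \emph{lifted} to a $D$-symplectic deformation of the blown-up divisor. Concretely, I would first apply Lemma \ref{lem: D-deformation=strong D-deformation} again to promote the equivalence between $\hat\Theta$ and $\hat\Theta^2$ to a \emph{strong} one; this is exactly what gives me a continuous family of Darboux charts $\hat I_{t}$ at the tracked extra marked point, with $D$ staying $\omega_t$-orthogonal. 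Performing the symplectic blow-up fiberwise along this family of charts then yields a smooth one-parameter family of blown-up symplectic manifolds together with their divisors and the (transported) markings, i.e.\ a $D$-symplectic homotopy from $\Theta$ to $\Theta^2$.

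The step I expect to be the main obstacle is ensuring that the fiberwise blow-up genuinely assembles into a smooth family, and that the blow-down and blow-up are mutually inverse up to deformation. Two subtleties arise. First, the size of the exceptional sphere (the capacity of the blown-up ball) may differ between $\Theta$ and $\Theta^2$, or may vary along the family; I would absorb this by a further $D$-symplectic homotopy rescaling the Darboux ball, using that the symplectic area of the exceptional sphere can be changed continuously within a deformation (this is the standard inflation/rescaling freedom, and it is consistent with the non-strict statement where cohomology classes are allowed to move). Second, I must check that the marked point data of the blown-up family is the intended one---that is, that the exceptional sphere produced by blowing up is the sphere I would blow down, and that the old markings reappear correctly. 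This is where the strength of Lemma \ref{lem: D-deformation=strong D-deformation} is essential: because the charts $\hat I_t$ vary smoothly and keep $D$ orthogonal, the blow-up locus and the induced new component are canonically determined at each time $t$, so the family is unambiguous. Once smoothness and the matching of marked data are established, the family is the desired $D$-symplectic homotopy, and the strict version follows verbatim by keeping the cohomology class of $\omega_t$ constant throughout (using the strict form of Lemma \ref{lem: D-deformation=strong D-deformation} and a strict rescaling).
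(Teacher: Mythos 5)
Your overall strategy is the paper's: part (i) by perturbing to $\omega$-orthogonality and using Lemma \ref{lem: existence of nice J-sphere} to realize a non-toric class by an embedded sphere, and part (ii) by promoting the equivalence of $\hat\Theta$ and $\hat\Theta^2$ to a \emph{strong} one via Lemma \ref{lem: D-deformation=strong D-deformation} and then blowing up fiberwise along the family of charts $I_{j,t}$ --- which is exactly the content of Lemma \ref{lem: canonical blow-ups} in the paper. Your explicit flagging of the ball-size/rescaling freedom is a reasonable elaboration of the paper's remark that one only recovers the original pair up to deformation equivalence (Lemma \ref{lem: perturbation}).

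There is, however, one concrete step you skip in the toric case of part (i). After blowing down a toric exceptional sphere $E$ meeting $C_1$ and $C_3$, the two intersection points $E\cap C_1$ and $E\cap C_3$ collapse to a single intersection point of the descended components, and a marking at an intersection point requires by definition a Darboux chart in which the two branches are the coordinate planes $\{x_1=y_1=0\}$ and $\{x_2=y_2=0\}$ --- i.e.\ the descended intersection must be $\omega$-orthogonal. Making $E$ meet $C_1$ and $C_3$ $\omega$-orthogonally does \emph{not} ensure this: in the local model the two descended branches are the normal fibers of $E$ over $E\cap C_1$ and $E\cap C_3$, which become symplectically orthogonal lines through the origin only for suitably positioned points on $E$. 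The paper resolves this with Lemma \ref{lem: marking moving}, moving one of the two intersection points along $E$ (within the $D$-symplectic deformation class) before blowing down. Your sketch would need either this or a post-blow-down perturbation (in the spirit of the remark following Lemma \ref{lem: marking adding/shrinking}) before the "new marked point" is well-defined. Relatedly, in part (ii) the reappearance of the markings that were deleted from $E$ is not automatic from the strong-chart machinery; it is supplied by the marking-addition Lemma \ref{lem: marking adding/shrinking}, which you should invoke explicitly rather than folding it into the smoothness of the family.
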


\subsection{Operations on marked divisors}

This subsection studies various operations on marked divisors as well as their stability property with respect to $D-$symplectic deformation.

\vspace{1em}
$\bullet$ {\bf Perturbations}
\vspace{1em}

The following fact will be frequently used.

\begin{lemma}\label{lem: perturbation}
Perturbations of a marked divisor preserve the strict $D-$symplectic deformation class.
\end{lemma}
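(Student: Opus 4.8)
The plan is to realize any perturbation as the time-one map of an ambient isotopy that leaves $\omega$ unchanged up to pullback, and then to reinterpret this pullback as a strict $D-$symplectic isotopy. Write $\Theta=(X,D,p_j,\omega,I_j)$, and let $\Theta'=(X,D',p_j',\omega,I_j')$ be a perturbation, so that each component of $D'$ is a $C^1-$small graph over the corresponding component of $D$, with $p_j'$ nearby centers and $I_j'$ nearby charts, while the form $\omega$ itself is unchanged. Since being a collection of symplectic surfaces with transversal positive intersections and no triple points is a $C^1-$open condition, the component-wise linear interpolation $D_s$ (taken in a fixed tubular neighborhood of $D$) is a path of symplectic divisors of the same combinatorial type, with $D_0=D$, $D_1=D'$, and the markings tracing out short paths $p_{j,s}$ from $p_j$ to $p_j'$ along $D_s$.

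First I would extend this motion to the ambient manifold. Exactly as in the proof of Lemma \ref{lem: def versus marked def}, the transversality of the intersections and the absence of triple points allow me to apply the smooth isotopy extension theorem to the embedded pair $(D,\{p_j\})$, producing a smooth ambient isotopy $\Phi=\{\Phi_s\}:X\times[0,1]\to X$ with $\Phi_0=\mathrm{id}_X$, $\Phi_s(D)=D_s$, and $\Phi_s(p_j)=p_{j,s}$; in particular $\Phi_1(D)=D'$ and $\Phi_1(p_j)=p_j'$.

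Next I set $\omega_t=\Phi_t^*\omega$. Because $\Phi_t$ carries $D$ onto the $\omega-$symplectic divisor $D_t$, the divisor $D$ is $\omega_t-$symplectic for every $t$, so $(X,D,p_j,\omega_t)$ is a $D-$symplectic homotopy of $\Theta$ in the sense of Definition \ref{defn: marked symplectic deformation}; moreover each $\Phi_t$ is isotopic to the identity, whence $[\omega_t]=[\omega]$ for all $t$ and the homotopy is in fact a strict $D-$symplectic isotopy with $\omega_0=\omega$. Finally, $\Phi_1^*\omega=\omega_1$ shows that $\Phi_1$ is a symplectomorphism from $(X,D,p_j,\omega_1)$ to $(X,D',p_j',\omega)$; since strict $D-$symplectic deformation equivalence is symmetric, its inverse supplies the symplectomorphism required in the definition. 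Together with the strict isotopy $\omega_t$, this exhibits $\Theta$ and $\Theta'$ as strict $D-$symplectic deformation equivalent, the embeddings $I_j,I_j'$ playing no role since the equivalence of marked divisors does not involve them.

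The only point requiring care is the isotopy extension near a marked point that is an intersection point of $D$, where one must keep the two branches transversal and the chosen center moving coherently with them throughout $D_s$. This is handled by the same mechanism invoked in Lemma \ref{lem: def versus marked def}, so I do not expect it to present a genuine obstacle; notice in particular that no Moser argument is needed, since the family $\omega_t=\Phi_t^*\omega$ is produced directly by pullback, and the remainder is routine bookkeeping.
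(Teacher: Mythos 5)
Your proof is correct and follows essentially the same route as the paper: the paper simply observes that a perturbation is a symplectic isotopy of the underlying divisor and invokes Lemma \ref{lem: def versus marked def} to convert it, via the ambient isotopy extension and pullback $\omega_t=\Phi_t^*\omega$, into a strict $D$-symplectic isotopy, which is exactly the construction you carry out in detail (including the correct observation that $[\Phi_t^*\omega]=[\omega]$ and that the charts $I_j$ play no role in the equivalence).
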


\begin{proof}
A perturbation of a marked divisor is simply a symplectic isotopy of the corresponding (unmarked) divisor. By Lemma \ref{lem: def versus marked def}, the perturbed divisor is symplectomorphic to the original divisor, up to a  $D-$symplectic isotopy.
\end{proof}

\vspace{1em}
$\bullet$ {\bf Marking addition}
\vspace{1em}

A marking addition of a marked divisor $(X,D,\{p_j\}_{j=1}^l,\omega,\{I_j\}_{j=1}^l)$ is another marked
divisor $(X,D,\{p_j\}_{j=1}^{l+1},\omega,\{I_j\}_{j=1}^{l+1})$ with the additional marking $(p_{l+1},I_{l+1})$.

\begin{lemma}\label{lem: marking adding/shrinking}
Let $(X,D,\{p_j\}_{j=1}^l,\omega,\{I_j\}_{j=1}^l)$ be a marked divisor.
If the two marked divisors $(X,D,\{p_j\}_{j=1}^{l}\cup\{q_1\},\omega,\{I_j\}_{j=1}^{l}\cup\{I_{q_1}\})$
together with
$(X,D,\{p_j\}_{j=1}^{l}\cup\{q_2\},\omega,\{I_j\}_{j=0}^{l}\cup \{I_{q_2}\})$ are obtained
by adding markings $(q_{1}, I_{q_1})$ and $(q_2, I_{q_2})$ respectively, then they are strict
$D-$symplectic deformation equivalent if

$\bullet$ the centers $q_1$ and $q_2$ coincide (intersection points of $D$ allowed), or

$\bullet$ $q_1$ and $q_2$ are distinct smooth points of the same irreducible component.

\end{lemma}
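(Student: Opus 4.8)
The plan is to treat the two bullets separately. In the first case, when $q_1=q_2$ (an intersection point of $D$ being allowed), the two marked divisors have literally identical underlying $4$-tuples $(X,D,\{p_j\}_{j=1}^l\cup\{q\},\omega)$; since the notion of (strict) $D$-symplectic deformation equivalence for marked divisors ignores the coordinate embeddings $I_j$ and records only the centers, the identity symplectomorphism together with the constant family $\omega_t\equiv\omega$ already exhibits a strict $D$-symplectic deformation equivalence. So the content is entirely in the second bullet, where $q_1\neq q_2$ are distinct smooth points of one irreducible component $C$ of $D$. Here I would produce a compactly supported Hamiltonian isotopy $\psi_t$ of $(X,\omega)$ which preserves $D$ componentwise, fixes every $p_j$, and satisfies $\psi_1(q_1)=q_2$; then $\psi_1^{-1}$ is a symplectomorphism of $(X,\omega)$ carrying $(X,D,\{p_j\}\cup\{q_2\},\omega)$ to $(X,D,\{p_j\}\cup\{q_1\},\omega)$ with matching labels, which is exactly a strict $D$-symplectic deformation equivalence (with the trivial deformation $\omega_t\equiv\omega$).

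To build $\psi_t$, first choose a smooth embedded path $\gamma$ in $C$ from $q_1$ to $q_2$ that avoids the finitely many intersection points of $C$ with the other components of $D$ as well as the finitely many marked points $p_j$ lying on $C$; this is possible because $C$ is a connected surface and removing finitely many points leaves it connected. Next take a neighborhood $U$ of $\gamma$ in $X$ small enough that $U\cap D=U\cap C$ and that $U$ contains none of the $p_j$. By the Weinstein symplectic neighborhood theorem, and using that the symplectic normal bundle of $C$ is trivial over the contractible arc, I can model $(U,\omega)$ as a neighborhood of the zero section in $V\times B^2$, where $V\subset C$ is a tubular neighborhood of $\gamma$ and the fiber $B^2$ carries the standard area form, with $C$ corresponding to the zero section $V\times\{0\}$.

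On the surface $V$, a point can be slid along $\gamma$ from $q_1$ to $q_2$ by a compactly supported time-dependent Hamiltonian isotopy generated by some $h_t$, by the usual transitivity of compactly supported Hamiltonian diffeomorphisms of a surface. I would then lift $h_t$ to $U$ by setting $H_t=(h_t\circ\pi)\,\rho$, where $\pi\colon U\to V$ is the projection and $\rho$ is a fiber cut-off equal to $1$ near the zero section and compactly supported in the fibers. The key computation is that along the zero section the Hamiltonian vector field $X_{H_t}$ is tangent to $C$: since $dH_t$ annihilates the vertical (fiber) directions there, $\omega(X_{H_t},\cdot)$ vanishes on $(TC)^{\omega}$, which forces $X_{H_t}\in(TC^{\omega})^{\omega}=TC$; moreover $X_{H_t}|_C=X_{h_t}$. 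Hence the flow leaves $C$ invariant and restricts on $C$ to the sliding flow, so it moves $q_1$ to $q_2$, while the cut-off makes $H_t$ compactly supported in $U$, so that extending by zero yields a global Hamiltonian isotopy $\psi_t$ of $(X,\omega)$. By construction $\psi_t$ is supported in $U$, hence fixes every component of $D$ other than $C$ and all the $p_j$, preserves $C$, and has $\psi_1(q_1)=q_2$, which completes the argument.

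The main obstacle, and the step that requires genuine care, is manufacturing a symplectic isotopy that simultaneously preserves the symplectic surface $C$ and is localized near the arc; this is precisely what the Weinstein normal-form model together with the tangency computation above accomplish. The remaining ingredients—connectivity of $C$ minus finitely many points to choose $\gamma$, and transitivity of compactly supported Hamiltonian isotopies on a surface to realize the sliding—are routine.
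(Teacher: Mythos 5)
Your proof is correct, and in the key second case it takes a different technical route from the paper's. The paper also reduces to sliding $q_1$ to $q_2$ along the component, but it does so by choosing a symplectic isotopy of the divisor $(D,\omega|_D)$ itself, invoking the \emph{smooth} isotopy extension theorem to get an ambient diffeotopy $\Phi_t$, and then compensating for the fact that $\Phi_t$ is not symplectic by declaring the $D$-symplectic isotopy to be $(X,D,\{p_j\}\cup\{q_1\},\Phi_t^*\omega)$ (legitimate, since $\Phi_t^*\omega$ is cohomologous to $\omega$) with the symplectomorphism $\Phi_1$ at the end. You instead manufacture a genuinely \emph{symplectic} ambient isotopy: a Weinstein product model $V\times B^2$ near the arc, a Hamiltonian $H_t=(h_t\circ\pi)\rho$, and the tangency computation $X_{H_t}\in((TC)^{\omega})^{\omega}=TC$ along the zero section. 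What your version buys is a stronger conclusion with no deformation of the form at all --- the equivalence is realized by a single compactly supported Hamiltonian diffeomorphism with the constant family $\omega_t\equiv\omega$, which is in the spirit of the paper's later Lemma on Hamiltonian diffeomorphisms carrying one sphere to another. What the paper's version buys is brevity: it offloads all the work onto the isotopy extension theorem and never needs a normal form or a cutoff. Both arguments correctly note that the coordinates $I_j$ play no role in the definition of (strict) $D$-symplectic deformation equivalence, so the first bullet is immediate; the only mild gap in your write-up is that the symplectic neighborhood theorem is applied to the noncompact subsurface $V$, which requires the standard shrinking to a compact core containing $\gamma$ (or applying the theorem to all of $C$ and trivializing over $V$), but this is routine and you flag it as the step needing care.
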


\begin{proof}
If $q_1$ and $q_2$ are the same point of $D$, then the claim is trivial since Definition \ref{defn: marked symplectic deformation} only involves the centers of marking, not the coordinates.

If $q_1$ and $q_2$ are smooth points of the same irreducible component, say $C_1$, then  we need to  show that the   4-tuple $(X,D,\{p_j\}_{j=1}^{l}\cup\{q_2\},\omega)$ is  symplectomorphic to a $D-$symplectic isotopy of $(X,D,\{p_j\}_{j=1}^{l}\cup\{q_1\},\omega)$.
For this purpose,
we  find a symplectic isotopy of $(D, \omega|_D)$ fixing $C_1$ setwise, fixing intersection points and $\{p_j\}$ pointwise
and moving $q_1$ to $q_2$.
Using the smooth isotopy extension theorem as in Lemma \ref{lem: def versus marked def},
this isotopy of symplectic divisor gives rise to a smooth isotopy $\Phi_t$ of $X$. The desired  $D-$symplectic isotopy is obtained by taking the $D-$symplectic isotopy to be $(X,D,\{p_j\}_{j=1}^{l}\cup\{q_1\},\Phi_t^*\omega)$ and the symplectomorphism to be $\Phi_1:(X,D,\{p_j\}_{j=1}^{l}\cup\{q_1\},\Phi_1^*\omega)\to (X,D,\{p_j\}_{j=1}^{l}\cup\{q_2\},\omega)$.

\end{proof}

We note that marking addition at an intersection point of a marked divisor is not always possible
because the intersection might not be $\omega$-orthogonal.
However, by Lemma \ref{lem: perturbation}, marking addition at a non-marked intersection point is always possible at the cost of choosing another
representative in the strict $D-$symplectic deformation class because a $C^0$ small perturbation among symplectic
divisor suffices to make the intersection point $\omega$-orthogonal (\cite{Gom95}).

\vspace{1em}
$\bullet$ {\bf Marking moving}
\vspace{1em}

Sometimes, it is useful to be able to move an intersection point.

\begin{lemma}\label{lem: marking moving}
Let $(X,D=C_1\cup C_2 \cup \dots \cup C_k,\{p_j\}_{j=1}^l,\omega,\{I_j\}_{j=1}^l)$ be a marked divisor.
Let $[C_2]^2=-1$ and $p_1=C_1 \cap C_2$.
For any smooth point $\overline{p_1}$ on $C_2$, there is a marked divisor
$(X,\overline{D}=\overline{C_1} \cup C_2 \cup \dots \cup C_k,\{\overline{p_1}\} \cup \{p_j\}_{j=2}^l,\omega',\{\overline{I_j}\}_{j=1}^l)$
such that $\overline{p_1}=\overline{C_1} \cap C_2$, where $\omega'=\omega$ and $C_1=\overline{C_1}$ away from a small open neighborhood of $C_2$.
Moreover, these two marked divisors  are in the same $D-$symplectic deformation equivalence class.
\end{lemma}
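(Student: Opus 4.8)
The plan is to realize the move of the intersection point through an ambient symplectic isotopy of $X$ supported in a small neighborhood of $C_2$, and then invoke Lemma \ref{lem: def versus marked def} to convert it into a $D$-symplectic isotopy. Concretely, I would first apply the symplectic neighborhood theorem to identify a neighborhood $N$ of $C_2$ with a neighborhood of the zero section in the normal disk bundle $\pi\colon N\to C_2$ (of Euler number $[C_2]^2=-1$), chosen small enough that $D\cap N$ consists of $C_2$ together with the local branches of the other components passing through the finitely many special points of $C_2$, namely the intersection points $q_m=C_m\cap C_2$ and whatever marked points among $p_2,\dots,p_l$ lie on $C_2$; in particular $p_1\in N$, and the special points together with $\overline{p_1}$ are all distinct. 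I would arrange the identification so that along $C_2$ the fibers of $\pi$ are $\omega$-orthogonal to $C_2$.

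Next, working on the base $(C_2,\omega|_{C_2})$, I would choose a smooth path from $p_1$ to $\overline{p_1}$ inside $C_2\setminus S$, where $S$ is the finite set of special points listed above, and produce a time-dependent Hamiltonian $H_t$ compactly supported in $C_2\setminus S$ whose flow $h_t$ fixes $S$ pointwise and satisfies $h_1(p_1)=\overline{p_1}$; this is possible because $C_2\setminus S$ is a connected open surface, on which compactly supported Hamiltonian diffeomorphisms act transitively. I would then lift $H_t$ to $N$ by setting $\widetilde H_t=\rho\,(H_t\circ\pi)$, where $\rho$ is a fiberwise cutoff equal to $1$ near $C_2$ and $0$ near $\partial N$, and let $\psi_t$ be the Hamiltonian flow of $\widetilde H_t$, extended by the identity to all of $X$. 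A short computation using $\omega$-orthogonality of the fibers along $C_2$ and $d\rho=0$ on $C_2$ shows that $X_{\widetilde H_t}$ is tangent to $C_2$ and restricts there to the Hamiltonian vector field of $H_t$; hence $\psi_t(C_2)=C_2$ and $\psi_t|_{C_2}=h_t$. Moreover $\widetilde H_t$ vanishes near $\partial N$ and near each fiber over a point of $S$, so $\psi_t$ is supported in $N$ and is the identity near the branches of the other components and near the marked points on $C_2$.

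With $\psi_t$ in hand I would set $\overline{C_1}=\psi_1(C_1)$, $\overline{D}=\psi_1(D)=C_2\cup\overline{C_1}\cup C_3\cup\dots\cup C_k$, $\omega'=\omega$, and $\overline{I_j}=\psi_1\circ I_j$. Since $\psi_1$ is a symplectomorphism preserving $C_2$ and equal to the identity outside $N$, we get $\overline{C_1}=C_1$ away from $N$, $\overline{C_1}\cap C_2=\psi_1(p_1)=\overline{p_1}$, and $\psi_1$ fixes the other components, the points $q_m$, and the marked points $p_2,\dots,p_l$; transversality and positivity of all intersections are preserved because $\psi_1$ is a symplectomorphism. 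Finally, the symplectic isotopy $D_t=\psi_t(D)$ (with constant form $\omega$) is exactly the situation of Lemma \ref{lem: def versus marked def}: $\psi_t$ is an ambient extension, so $(X,D,\psi_t^*\omega)$ is a $D$-symplectic isotopy of $\Theta$, and the symplectomorphism $\psi_1^{-1}$ carries $(X,\overline{D},\{\overline{p_1}\}\cup\{p_j\}_{j\ge2},\omega)$ to $(X,D,\{p_j\}_{j=1}^l,\psi_1^*\omega)$ with the marked points matching. This is precisely a $D$-symplectic deformation equivalence in the sense of Definition \ref{defn: marked symplectic deformation}.

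I expect the main obstacle to be the construction of the lifted flow $\psi_t$ with all four required features simultaneously: preserving $C_2$ setwise, inducing the prescribed sliding on $C_2$, staying supported in $N$, and fixing the local branches of the other components and the remaining marked points. The delicate points are the tangency of $X_{\widetilde H_t}$ to $C_2$ (which forces the $\omega$-orthogonal choice of neighborhood and the condition $d\rho|_{C_2}=0$) and the bookkeeping that confines the Hamiltonian away from the finite set $S$, so that no other component of $D$ is disturbed while $C_1$ slides.
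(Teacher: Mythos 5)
Your proof is correct, but it takes a genuinely different route from the paper. The paper blows down the $-1$-sphere $C_2$ (using a compatible $J$ integrable near $C_2$), so that the branches of the components meeting $C_2$ become complex lines $V_j$ through the origin of a Darboux ball and $\overline{p_1}$ becomes another line $V_{\overline{p_1}}$; it then chooses a path of complex lines $W_t$ from $V_1$ to $V_{\overline{p_1}}$ avoiding the other $V_j$, interpolates the descended $C_1$ onto $W_t$ near the origin via the trick of Lemma 5.10 of \cite{McL12}, and lifts the resulting isotopy back through the blow-up. You instead stay upstairs and produce an ambient Hamiltonian isotopy, fibered over $C_2$ via a cutoff of $H_t\circ\pi$ in a symplectically orthogonal tubular neighborhood; your tangency computation for $X_{\widetilde H_t}$ along $C_2$ is right, and the rest is bookkeeping. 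Your approach buys several things: it avoids the blow-down and McLean's interpolation lemma entirely, it never uses the hypothesis $[C_2]^2=-1$, it needs no preliminary perturbation to $\omega$-orthogonality of the intersections, and it yields the stronger conclusion that the two marked divisors are related by an ambient symplectomorphism Hamiltonian-isotopic to the identity (hence the form never changes and the equivalence is even strict). The paper's route has the advantage of meshing with the blow-up/blow-down formalism used throughout Section 2 and of giving explicit control of the divisor in the blown-down chart. The only places where you should tighten the write-up are the order of choices -- fix $H_t$ with support in a compact subset of $C_2\setminus S$ first, then take a neighborhood $U$ of $S$ disjoint from that support and shrink $N$ so that each $C_m\cap N$ ($m\ge 3$) lies in $\pi^{-1}(U)$ -- and the observation that any marked point $p_j$ ($j\ge 2$) not lying on $C_2$ sits at positive distance from $C_2$, so a further shrinking of $N$ guarantees $\psi_1$ fixes all centers $p_j$, which is all that Definition \ref{defn: marked symplectic deformation} requires.
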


\begin{proof}
By Lemma \ref{lem: perturbation}  we may assume that  the intersection points of $D$ are $\omega$-orthogonal.
In particular, if $C_j$ intersects $C_2$, then $C_j$ coincides with a fiber of the symplectic normal bundle of $C_2$ when identifying the symplectic normal bundle with a tubular neighborhood of $C_2$.

Choose an $\omega$-compatible almost complex structure $J$ integrable near $C_2$ which coincides with $(I_j)_*(J_{std})$ for all $j$ and making the symplectic normal bundle a holomorphic vector bundle.
We blows down $C_2$ and identify the ball obtained by blowing down $C_2$ as a chart $(B(\epsilon),\omega_{std},J_{std})$.
In this chart, $C_j$ descends to the union of complex vector subspaces $V_j$ each of which corresponds to an intersection point of $C_2\cap C_j$.
On the other hand, $\overline{p_1}$ being a point on $C_2$ represents a complex vector subspace $V_{\overline{p_1}}$ in this chart.
We take a smooth family of complex vector subspaces $W_t$ from $V_1$ to $V_{\overline{p_1}}$ avoiding $V_j$ for all $j \neq 1$.
Applying the trick in Lemma 5.10 of \cite{McL12} with $N=N'=\emptyset$, $i=1$, $S$ being the center of $B(\epsilon)$, $S_1$ being the descended $C_1$, $W_t=W_1^t$,
we obtain an isotopy of symplectic manifolds $C^t$ supported in $B(\epsilon)$ from the descended $C_1$ (i.e. $C^{t=0}$) to some $C^{t=1}=\tilde{C_1}$ such that $C^t$ coincides with $W_t$ near the origin of $B(\epsilon)$ for all $t$.
By blowing up $B(\epsilon_2) \subset B(\epsilon)$ for some sufficiently small $\epsilon_2$, we can lift this symplectic isotopy to a $D-$symplectic deformation from
$(X,D=C_1\cup C_2 \cup \dots \cup C_k,\{p_j\}_{j=1}^l,\omega,\{I_j\}_{j=1}^l)$ to
$(X,\overline{D}=\overline{C_1} \cup C_2 \cup \dots \cup C_k,\{\overline{p_1}\} \cup \{p_j\}_{j=2}^l,\omega',\{\overline{I_j}\}_{j=1}^l)$ such that
$\overline{p_1}=\overline{C_1} \cap C_2$, where $\overline{C_1}$ is the proper transform of $\tilde{C_1}$.
\end{proof}

\vspace{1em}
$\bullet$ {\bf Canonical blow-up}
\vspace{1em}


Given a marked divisor with $l$ markings, there are $l$ canonical blow-ups we can do, namely, blow-ups using the symplectic
embeddings $I_j$ and hence the blow-up size is $B(\delta_j)$.
A canonical blow-up of a marked divisor is still a marked divisor with one less
the number of $p_j$'s.


\begin{lemma}\label{lem: canonical blow-ups}
 If $\Theta=(X,D,\{p_j\}_{j=1}^l,\omega,\{I_j\}_{j=1}^l)$ and
$\Theta^2=(X^2,D^2,\{p^2_j\}_{j=1}^l,\omega^2,\{I^2_j\}_{j=1}^l)$ are $D-$symplectic deformation equivalent,
then so are the marked divisors obtained by canonical blow-ups using $I_1$ and $I^2_1$.
\end{lemma}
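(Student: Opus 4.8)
The plan is to push the entire deformation through a blow-up in a one-parameter family, using as input the family of Darboux charts that comes for free from the strong deformation lemma. First I would apply Lemma \ref{lem: D-deformation=strong D-deformation} to upgrade the given $D$-symplectic deformation equivalence between $\Theta$ and $\Theta^2$ to a \emph{strong} one. This produces a strong $D$-symplectic homotopy $(X,D,\{p_j\},\omega_t,\{I_{j,t}\})$ with $I_{j,0}=I_j|_{B(\epsilon_j)}$, together with a symplectomorphism $\psi$ identifying the $t=1$ end $(X,D,p_j,\omega_1,I_{j,1})$ with $(X^2,D^2,p^2_j,\omega^2,(I^2_j)^\#)$. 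The decisive gain is the smooth family $I_{1,t}\colon B(\epsilon_1)\to(X,\omega_t)$ of symplectic embeddings, all sending the origin to $p_1$ and all pulling $D$ back to the \emph{same} standard local model; this is precisely the data one needs to blow up coherently in $t$.

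Second, I would reconcile the blow-up size $\delta_1$ demanded by the canonical blow-up with the smaller size $\epsilon_1<\delta_1$ furnished by the family. Blowing up $I_1(B(\delta_1))$ and blowing up $I_1(B(\epsilon_1))$ at the same point with the same local divisor model give the same underlying manifold and divisor, differing only in the symplectic area assigned to the exceptional sphere; interpolating the radius from $\delta_1$ to $\epsilon_1$ is a $D$-symplectic homotopy, since $D$ (as proper, resp.\ total, transform) stays symplectic and the markings $\{p_j,I_{j,\cdot}\}_{j\ge 2}$, which are disjoint from the ball, are untouched. Hence the canonical blow-up of $\Theta$ is $D$-symplectic deformation equivalent to the blow-up using $I_{1,0}=I_1|_{B(\epsilon_1)}$, and likewise for $\Theta^2$. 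By transitivity it suffices to compare the smaller-radius blow-ups.

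Third, and at the core of the argument, I would perform the symplectic blow-up along $I_{1,t}(B(\epsilon_1))$ simultaneously for all $t\in[0,1]$. Because $I_{1,t}$ is smooth in $t$ and $I_{1,t}^{-1}(D)$ is the fixed standard complex model, the fibrewise blow-up (total transform when $p_1$ is an intersection point, proper transform when it is a smooth point) assembles into a smooth family $(\tilde X,\tilde D,\tilde\omega_t)$ of symplectic divisors carrying the remaining markings $\{p_j,I_{j,t}\}_{j\ge 2}$, i.e.\ a strong $D$-symplectic homotopy of the canonical blow-up of $\Theta$. At $t=1$, the symplectomorphism $\psi$ matches $I_{1,1}$ with $(I^2_1)^\#$ and therefore lifts to a symplectomorphism of the blown-up manifolds onto the canonical blow-up of $\Theta^2$; here one observes that for an intersection point the choice between $I^2_1$ and $(I^2_1)^{re}$ is invisible, since swapping the two branches yields the same total transform. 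Concatenating the radius-interpolations of the second step with this family gives the desired $D$-symplectic deformation equivalence.

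I expect the only delicate point to be the smoothness and symplecticity of the resulting family $\tilde\omega_t$ near the exceptional locus. This is exactly what the constancy of the local model $I_{1,t}^{-1}(D)$ secures, so the genuine content has already been absorbed into Lemma \ref{lem: D-deformation=strong D-deformation}; the radius interpolation and the functoriality of blow-up under $\psi$ are then routine. (A strict version, should it be needed, would additionally require matching the exceptional areas, and is handled the same way with cohomologous forms.)
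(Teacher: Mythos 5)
Your proposal is correct and follows essentially the same route as the paper: upgrade to a strong $D$-symplectic deformation equivalence via Lemma \ref{lem: D-deformation=strong D-deformation} and then blow up fibrewise along the family $I_{1,t}$. The paper's proof is just these two sentences; your additional care about interpolating the blow-up radius from $\delta_1$ to $\epsilon_1$ and about the irrelevance of the $re$-swap for the total transform fills in details the paper leaves implicit, and both points are handled correctly.
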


\begin{proof}
 By Lemma \ref{lem: D-deformation=strong D-deformation}, $\Theta$ and $\Theta^2$ are strong $D-$symplectic deformation equivalent.
 By blowing up using $I_{1,t}$, we obtain a $D-$symplectic deformation equivalence between the blown-up marked divisors.
\end{proof}

\subsection{Proof of Proposition \ref{prop: marked blowdown}}




\begin{proof}[Proof of Proposition \ref{prop: marked blowdown}]


For a non-toric class $e$, we can find by Lemma \ref{lem: existence of nice J-sphere},
a pseudo-holomorphic representative $E$ such that $D$ is at the same time pseudo-holomorphic, after possibly applying Lemma \ref{lem: perturbation}
to move $\Theta$ in the strict $D-$symplectic deformation class.
By positivity of intersection, $E$ intersects exactly one irreducible component
of $D$ and the intersections is positively transversally once and hence a non-toric exceptional curve.
By perturbing $E$, we can assume $E$ has $\omega$-orthogonal intersection with $D$.
We can get a marked divisor after blowing down $E$ with a marked point corresponds to the contracted $E$.

For a toric class $e$, we again apply Lemma \ref{lem: perturbation} to move $\Theta$ in its strict $D-$symplectic deformation class such that
every intersection is $\omega$-orthogonal.
The irreducible component $E$ of $D$ in the class $e$ is a toric exceptional sphere.
Hence, $E$ intersects two other irreducible components of $D$ once.
We apply Lemma \ref{lem: marking moving} to find another representative of $\Theta$ in the $D-$symplectic deformation class
such that after we blow down the exceptional curve, the intersection point corresponding to the exceptional curve is an $\omega$-orthogonal
intersection point so this descended divisor is still a marked divisor
(recall, a marking for a marked divisor at an intersection point requires the intersection point is an $\omega$-orthogonal intersection).


Finally, suppose the blow down divisors   are $D-$symplectic deformation equivalent.
We want to do canonical blow-ups and marking additions to recover our original divisor $D$ and $D^2$.
Notice that, marking additions are needed because when one blow down a divisor which originally has markings on it, the marking will not persist after the blow-down. Therefore, when we blow up the symplectic ball back, we need marking additions to get back the original marked divisor.
We remark that we may not get back exactly the pair of $D$ and $D^2$ by just canonical blow-ups and marking additions but we can get some pair   in the same
$D-$symplectic deformation equivalence classes  by Lemma \ref{lem: perturbation}.


Since $D-$symplectic deformation equivalence is stable under canonical blow-ups (Lemma \ref{lem: canonical blow-ups}) and marking additions (Lemma \ref{lem: marking adding/shrinking}), we
conclude that $\Theta$ is $D-$symplectic deformation equivalent to $\Theta^2$.

\end{proof}


\section{Minimal models}\label{section: Reduction to minimal models}

We first collect some  facts, which should be well known to experts.

\begin{lemma} \label{lemma: genus 1}
Let $(X, D, \omega)$ be a symplectic log Calabi-Yau surface. Then $X$ is rational or an  elliptic ruled surface, and
$D$ is either a torus or a cycle of spheres. If $(X, D, \omega)$ is a symplectic Looijenga pair, then $(X, \omega)$
is rational.
\end{lemma}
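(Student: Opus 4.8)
The plan is to combine the structure theorem for symplectic four-manifolds of negative Kodaira dimension with two elementary inputs: the adjunction formula, which pins down the combinatorial type of $D$, and a homological degree argument against the fiber class of the ruling, which bounds the genus of the base.

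First I would record the sign of $K\cdot[\omega]$. Since $[D]=\mathrm{PD}(c_1(X,\omega))=-K$ and $D$ is a nonempty symplectic divisor whose components are oriented positively by $\omega$, we get $-K\cdot[\omega]=[D]\cdot[\omega]=\sum_i\mathrm{area}(C_i)>0$, so $K\cdot[\omega]<0$. By Theorem A of \cite{Liu96} (or \cite{OhOn96}) a symplectic four-manifold with $K\cdot[\omega]<0$ is rational or ruled; in particular $X$ admits a map $\pi\colon X\to B$ onto a closed surface of some genus $g$ whose generic fiber is an embedded symplectic sphere of class $f$ with $f^2=0$ and, by adjunction, $K\cdot f=-2$.

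Next I would run adjunction on each component. Writing $g_i$ for the genus of $C_i$ and $m_i=\sum_{j\ne i}C_i\cdot C_j\ge 0$ for the number of its intersection points with the rest of $D$ (all transverse, positive, with no triple points), the formula $2g_i-2=C_i^2+K\cdot C_i$ together with $K=-\sum_j[C_j]$ collapses to $2g_i-2=-m_i$, i.e. $m_i=2-2g_i$. Hence $g_i\ge2$ is impossible; $g_i=1$ forces $m_i=0$, so by connectedness $D$ is a single torus; and if every $g_i=0$ then each sphere meets the others in exactly two points, so the dual graph is connected and $2$-regular, i.e. $D$ is a cycle of spheres. This already gives the dichotomy for $D$, and since a symplectic Looijenga pair has only spherical components it necessarily falls into the cycle-of-spheres case.

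Finally I would bound $g$ using $\pi$. The homological identity $\pi_*\alpha=(\alpha\cdot f)[B]$ (which persists on any blow-up of a ruled surface, since exceptional and fiber classes have zero intersection with $f$) shows that the degree of $\pi|_{C_i}\colon C_i\to B$ equals $C_i\cdot f$, and $\sum_i C_i\cdot f=[D]\cdot f=-K\cdot f=2$, so some component maps to $B$ with nonzero degree. A nonzero-degree map from a closed oriented surface of genus $h$ to one of genus $g$ forces $g\le h$, because the pullback $H^1(B)\to H^1(C_i)$ is then injective. In the cycle-of-spheres case the relevant component is a sphere ($h=0$), giving $g=0$, so $X$ is rational, which settles the Looijenga statement; in the single-torus case ($h=1$) we only get $g\le1$, so $X$ is rational or an elliptic ruled surface. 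I expect the one genuine subtlety to be extracting from the structure theorem an honest sphere-fibration $\pi\colon X\to B$ on the possibly non-minimal $X$ together with the fiber-class identities above; once the ruling and the class $f$ are in hand, the remaining degree and genus estimates are routine.
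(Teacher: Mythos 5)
Your proposal is correct and follows essentially the same route as the paper: positivity of $c_1\cdot[\omega]$ plus Theorem A of Liu/Ohta--Ono to get rational or ruled, adjunction to force $\sum_{j\ne i}C_i\cdot C_j=2-2g_i\ge 0$ and hence the torus/cycle-of-spheres dichotomy, and the identity $[D]\cdot f=2$ to bound the base genus via a nonzero-degree map to the base. The only cosmetic difference is that the paper first smooths $D$ to a torus representing $c_1$ and projects that, whereas you project an individual component of positive degree; both yield the same genus estimates.
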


\begin{proof}
Since $D$ is symplectic and $[D]=PD(c_1(X, \omega))$, we have $c_1(X, \omega) \cdot [\omega]=[D] \cdot [\omega] >0$.
 By Theorem A of \cite{Liu96} or \cite{OhOn96}, $X$ is rational or ruled.

 Write $D=C_1 \cup C_2 \dots \cup C_k$, where each $C_i$ is a smoothly embedded closed symplectic genus $g_i$ surface.
 By adjunction, we have $[C_i]\cdot[D]=[C_i]^2+2-2g_i$.
 Therefore, we have $$[C_i]\cdot (\sum\limits_{j \neq i}[C_j])=2-2g_i \ge 0.$$
 In particular, we have $g_i \le 1$ for all $i$.
 Since we assumed $D$ is connected (we always assume a symplectic divisor is connected),  $D$ is either a torus or a cycle of spheres.
 Here a cycle of spheres means that the dual graph is a circle and each vertex has genus $0$.

If $X$ is not rational, then  $X$ admits an $S^2-$fibration structure over a Riemann surface of positive genus.  After possibly smoothing,
we get a torus  $T$ representing the class $c_1(X)$.
 Moreover, $c_1(X)(f)=2$ where $f$ is the fiber class. The projection from $T$ to the base
 is of non-zero degree.
 Therefore, the base genus of $X$ is at most $1$.

 If $(X,D,\omega)$ is a symplectic Looijenga pair, then at least one of the sphere component pairs positively with the fiber class (by $c_1(X)(f)=2$ again).
 Hence, the base genus is $0$ and $X$ is rational.
\end{proof}


 For a cycle with $k$ spheres we will also call it a $k-$gon, and a torus a $1-$gon.
 If we allow some $C_i$ to be positively immersed, then
 by adjunction we see that the only possibility is a single sphere with one positive double point,
 which we call a degenerated 1-gon.

The following observations are straightforward.

\begin{lemma}\label{lemma: same type}
The operations of toric blow-up, non-toric blow-up,
toric blow-down and non-toric blow-down   all  preserve being symplectic log Calabi-Yau.

\end{lemma}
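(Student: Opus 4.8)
The plan is to reduce the statement to the single homological identity $[D]=PD(c_1(X,\omega))$ defining the symplectic log Calabi-Yau condition, and to track how this identity transforms under a symplectic blow-up. The four operations come in two mutually inverse pairs (toric blow-up against toric blow-down, and non-toric blow-up against non-toric blow-down), so it suffices to verify the two blow-up directions and then read off the blow-downs by reversing the argument.

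First I would record the transformation law for a blow-up $\pi\colon \tilde X\to X$ of a Darboux ball. Writing $\iota\colon H_2(X)\to H_2(\tilde X)$ for the natural lift of homology classes (total transform) and $E$ for the exceptional class with $E\cdot E=-1$, one has $K_{\tilde X}=\pi^*K_X+PD(E)$, equivalently $PD(c_1(\tilde X))=\iota\bigl(PD(c_1(X))\bigr)-E$. I would also use that the proper transform of a symplectic surface $C$ passing through the center with local multiplicity $m$ has class $\iota[C]-mE$. That the four operations produce honest symplectic divisors --- transversality, positivity of intersections, and the absence of triple points all persisting --- is guaranteed by the local complex models used to define them in the previous subsection, so the only point left to check is that the new reduced configuration again represents the anticanonical class.

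For a non-toric blow-up (proper transform at a smooth point $p$ of one component) the local multiplicity of $D$ at $p$ equals $1$, so the proper transform $\tilde D$ has class $\iota[D]-E$; combined with $[D]=PD(c_1(X))$ and the displayed law this gives $[\tilde D]=\iota(PD(c_1(X)))-E=PD(c_1(\tilde X))$. For a toric blow-up (total transform at an intersection point $p$) the local multiplicity is $2$, so the proper transform of $D$ has class $\iota[D]-2E$, while the new reduced divisor is $\tilde D\cup E$ with total class $(\iota[D]-2E)+E=\iota[D]-E=PD(c_1(\tilde X))$. The blow-down statements follow by pushing forward: for $\pi\colon X\to X'$ contracting an exceptional sphere $E$ one has $\pi_*PD(c_1(X))=PD(c_1(X'))$ and $\pi_*E=0$, and in each case the class of the descended divisor --- namely $\pi_*[D]$ in the non-toric case where $E\not\subset D$, and $\pi_*([D]-E)$ in the toric case where $E\subset D$ --- equals $PD(c_1(X'))$.

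The one place demanding care is the toric case, where the two-fold multiplicity of the total transform and the single copy of $E$ in the reduced divisor must be balanced against the $-E$ appearing in $PD(c_1(\tilde X))$. I expect this bookkeeping, rather than any analytic issue, to be the sole source of a possible slip; once the multiplicities are pinned down correctly the identity closes up immediately.
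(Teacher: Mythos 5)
Your proof is correct, and it is exactly the homological bookkeeping the paper has in mind: the paper offers no written proof (it labels the lemma a straightforward observation), and the intended argument is precisely to track $[D]=PD(c_1)$ under $PD(c_1(\tilde X))=\iota(PD(c_1(X)))-E$, with the multiplicity-two versus reduced-total-transform balance in the toric case handled as you do. The deferral of transversality, positivity, and the no-triple-point condition to the local models in the paper's definitions of the four operations is also appropriate.
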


In the next subsection it is convenient to apply a slightly more general version of toric blow-down:
Suppose a component $C$  of a bi-gon $D$  is an exceptional sphere.
The generalized  toric blow down of $D$ along $C$  is blowing down $C$, which results in a degenerated 1-gon.
Notice that the homology class of a degenerated 1-gon is still Poincare dual to the first Chern class.




\subsection{Minimal reductions}

\begin{defn}\label{defn: minimal model}
A  symplectic log Calabi-Yau surface  $(X,D,\omega)$ is called
 a {\bf minimal model}  if either $(X, \omega)$ is minimal,  or $(X,D,\omega)$  is a symplectic  Looijenga pair
with $X=\mathbb{C}P^2 \# \overline{\mathbb{C}P^2}$.

\end{defn}

\begin{lemma}\label{lemma: fundamental theorem + minimal model}
Every symplectic log Calabi-Yau surface  can be transformed   to a minimal model via a sequence of non-toric blow-downs followed by a sequence of toric blow-downs.
\end{lemma}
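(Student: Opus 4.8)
The goal is to reduce an arbitrary symplectic log Calabi-Yau surface $(X,D,\omega)$ to a minimal model by a controlled sequence of blow-downs. The plan is to first exhaust all \emph{non-toric} exceptional classes that can be blown down, and only then deal with \emph{toric} exceptional spheres that sit inside $D$. The reason for this ordering is that a non-toric blow-down contracts an exceptional sphere $E$ meeting $D$ transversally in a single point on one component; this does not change the combinatorial type (the $k$-gon or torus structure) of $D$ but does decrease the second Betti number / Euler characteristic, so the process must terminate. A toric blow-down, by contrast, contracts a $(-1)$-component \emph{of} $D$ and reduces the number of components in the cycle, so these should be saved for the end where they simplify the polygon.

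\medskip
\noindent\textbf{Step 1: terminate the non-toric blow-downs.}
First I would argue that if $(X,\omega)$ is not minimal, then there exists an exceptional class $e$ that is non-toric, i.e. $e$ pairs to $1$ with exactly one component of $D$ and trivially with the rest. If $(X,\omega)$ is not minimal, it carries some exceptional class, and since $[D]=PD(c_1)$ while $c_1\cdot e=1$ for any exceptional class $e$, the total pairing $[D]\cdot e = 1$ is forced. Thus $e$ meets $D$ positively with total multiplicity one, so $e$ can only be non-toric (or possibly coincide with a component, the toric case), never meeting several components nontrivially. Using Lemma~\ref{lem: existence of nice J-sphere} I obtain an embedded $J$-holomorphic representative $E$ meeting $D$ transversally once, and by Lemma~\ref{lemma: same type} the contracted $(X',D',\omega')$ is again symplectic log Calabi-Yau. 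Each such contraction strictly lowers $b_2(X)$, so after finitely many non-toric blow-downs I reach a pair $(X',D',\omega')$ in which \emph{every} exceptional class is toric; equivalently, every exceptional sphere is a component of $D'$.

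\medskip
\noindent\textbf{Step 2: run the toric blow-downs to a minimal model.}
Now on $(X',D')$ every exceptional class is a component of the cycle. If $(X',\omega')$ is already minimal we stop. Otherwise pick an exceptional component $C$ of $D'$ and perform a toric blow-down (or the generalized toric blow-down of the preceding remark when $D'$ is a bi-gon, landing on a degenerated $1$-gon). By Lemma~\ref{lemma: same type} the result is still symplectic log Calabi-Yau, and each toric blow-down decreases the number of components $k$ of the polygon by one, so this too terminates. The subtle point is to check that the terminal object is a genuine minimal model in the sense of Definition~\ref{defn: minimal model}: either $(X,\omega)$ becomes minimal, or we are stuck with a Looijenga pair over $\mathbb{C}P^2\#\overline{\mathbb{C}P^2}$ in which the only remaining exceptional sphere cannot be toric-blown-down without destroying the divisor. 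In the elliptic-ruled and torus cases of Lemma~\ref{lemma: genus 1}, no toric blow-down is available once the non-toric ones are exhausted, and minimality of the ruled surface is reached directly.

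\medskip
\noindent\textbf{The main obstacle.}
The hard part is the interface in Step~2: verifying that after exhausting non-toric blow-downs one can \emph{always} continue with toric blow-downs until reaching exactly the two cases in Definition~\ref{defn: minimal model}, and in particular handling the one exceptional situation where $X=\mathbb{C}P^2\#\overline{\mathbb{C}P^2}$ is non-minimal yet no further divisor-preserving contraction exists. This requires a case analysis of small polygons (bi-gons, triangles) against the classification of minimal rational and ruled surfaces, using that $[D]=PD(c_1)$ pins down the homology of the components. I expect this bookkeeping — rather than any single geometric input — to be where the care is needed; the individual blow-downs are already supplied by Lemmas~\ref{lem: existence of nice J-sphere} and~\ref{lemma: same type}.
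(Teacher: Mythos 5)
Your overall architecture (non-toric blow-downs until exhausted, then toric blow-downs, with termination by $b_2$ and by the number of components) matches the paper's, but Step 1 contains a genuine error and, as a consequence, the key geometric input of the paper's proof is missing. You claim that since $[D]\cdot e=c_1\cdot e=1$, any exceptional class ``meets $D$ positively with total multiplicity one'' and is therefore non-toric or a component of $D$. The total homological pairing being $1$ does not control the individual pairings $[C_i]\cdot e$: these can have mixed signs. For instance, for the bigon in $\mathbb{C}P^2\#3\overline{\mathbb{C}P^2}$ with components in the classes $H-E_1-E_2-E_3$ and $2H$, the exceptional class $e=H-E_1-E_2$ pairs $-1$ with the first component and $2$ with the second; it is neither toric nor non-toric. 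The paper only claims non-toricity for exceptional classes that pair \emph{non-negatively} with every component (there adjunction does force one pairing to be $1$ and the rest $0$), and accordingly the correct output of Step 1 is the weaker statement that every remaining exceptional class pairs \emph{negatively} with some component of $D_0$ --- not that every exceptional sphere is a component of $D_0$.

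Bridging that gap is exactly where the paper needs a nontrivial geometric fact that your proof omits: by Pinsonnault's Lemma 1.2 of \cite{Pi08}, when $X_0$ is neither minimal nor $\mathbb{C}P^2\#\overline{\mathbb{C}P^2}$, the exceptional class of minimal area has an embedded $J_0$-holomorphic representative for \emph{every} $J_0$ making $D_0$ holomorphic; since that class pairs negatively with some component, positivity of intersection forces the representative to coincide with that component, which is therefore an exceptional sphere in $D_0$. Without this (or a substitute), you cannot start the toric blow-down phase, nor conclude that a torus-type $D_0$ forces $X_0$ minimal. Two further points you defer but which the paper must and does argue: (a) after each toric blow-down one must check that no \emph{new} non-toric exceptional class has appeared (otherwise the claimed ordering ``non-toric first, then toric'' breaks); the paper does this by lifting a putative embedded representative back through the toric blow-down, since it misses the new node. (b) In the bigon case where the minimal-area exceptional component meets only one other component, one must actually prove $X_0=\mathbb{C}P^2\#\overline{\mathbb{C}P^2}$; the paper does so via the generalized toric blow-down to a degenerated $1$-gon and by excluding $\mathbb{S}^2\times\mathbb{S}^2$ using the prior exhaustion of non-toric classes. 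You correctly locate the difficulty here, but the proof as written does not supply it.
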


\begin{proof}

{\bf  Non-toric blow-down}
  Suppose $e$ is an exceptional class intersecting each component of $D$ non-negatively.
  Then $e$ is a non-toric exceptional class by adjunction.

By Lemma \ref{lem: existence of nice J-sphere}, there is an $\omega$-compatible almost complex structure such that
$D$ $J$-holomorphic (possibly after perturbation of $D$) and $e$ has an embedded $J$-holomorphic sphere representative $E$.
Thus we can perform non-toric blow-down along $E$.

By iterative non-toric blow-downs, we  end up with a symplectic log Calabi-Yau surface $(X_0,D_0,\omega_0)$ such that any exceptional class pairs negatively with some component of $D$.

{\bf Toric blow-down}

If $X_0$ is not minimal and not diffeomorphic to $\mathbb{C}P^2 \# \overline{\mathbb{C}P^2}$, then for any $\omega_0$-compatible $J_0$ making $D_0$ $J_0$-holomorphic, the exceptional class
with minimal $\omega_0$-area has an embedded $J_0$-holomorphic representative, by Lemma 1.2 of \cite{Pi08}.
Therefore, this embedded representative must coincide with an irreducible component $C$ of $D_0$.

Therefore if $D_0$ is a torus then $X_0$ must be minimal. So from now on we assume that
$D_0$ is a cycle of spheres, ie. $(X_0,D_0,\omega_0)$ is a Looijenga pair.


Suppose that  $C$ intersects two other components of $D_0$ and hence a toric exceptional sphere.
In this case we perform  toric blow down along $C$ to  get another  symplectic Looijenga pair $(X_0',D_0',\omega_0')$.
 We claim that there is no exceptional class in $X_0'$ that pairs all irreducible components of $D_0'$ non-negatively.
If there were one, by Lemma \ref{lem: existence of nice J-sphere}, after possibly perturbing $D_0'$ to be $\omega_0'-$orthogonal, then there would be a embedded pseudo-holomorphic representative $E_0'$ intersecting exactly one irreducible component of $D_0'$ transversally at a smooth point.
This $E_0'$ can be lifted to the symplectic log Calabi-Yau surface $(X_0,D_0,\omega_0)$ because the contraction of $C$ becomes an intersection point of $D_0'$, which is away from $E_0'$.
Contradiction.
Therefore, we can continue to perform toric blow-down until the ambient manifold is minimal, diffeomorphic to $\mathbb{C}P^2\# \overline{\mathbb{C}P^2}$ or the minimal area exceptional sphere
intersect only one irreducible component of the divisor.

We now consider the case that the minimal area expectional sphere $C$ only intersects
with one component of the divisor $D_0$, then $D_0$ must be  a bigon.
We claim that
 $X_0=\mathbb{C}
P^2 \# \overline{\mathbb{C}P^2}$ in this case, and hence $(X_0,D_0,\omega_0)$ is minimal,
according to Definition \ref{defn: minimal model}.
To see why $X_0=\mathbb{C}
P^2 \# \overline{\mathbb{C}P^2}$, we apply a generalized toric blow-down along $C$ to obtain $(X_0',D_0',\omega_0')$
where $D_0'$ is a degenerated 1-gon.
We next show that $(X_0',  \omega_0')$ is minimal.
After possibly perturbing  the nodal point of $D_0'$ to be
$\omega_0'-$orthogonal so $D_0'$ can be made a pseudo-holomorphic nodal sphere, the analysis above also shows that there is no exceptional class in $X_0'$ that intersects $[D_0']$ non-negatively.
Since $D_0'$ represents the Poinc\'are dual of $c_1(X_0', \omega_0')$, there are also no
exceptional class intersecting $[D_0']$ negatively.
Thus, it means that $X_0'=\mathbb{C}P^2$ or $\mathbb{S}^2\times \mathbb{S}^2$.
If $X_0'$ is $\mathbb{S}^2 \times \mathbb{S}^2$, then $D_0'$ is obtained by blowing down a component of a bi-gon  $D_0$ in $X_0=\mathbb{C}P^2 \# 2\overline{\mathbb{C}P^2}$.
In this case there are three exceptional class in $(X_0, \omega_0)$ with pairwise intersecting number 1.
It is simple to check  by adjunction that any  exceptional class not represented by
  any of the two  components of $D_0$ is non-toric. But  this situation would not appear due to our procedure which performs non-toric blow down first.
Hence the only possibility is that $X_0'=\mathbb{C}P^2$, from which it follows that
$X_0=\mathbb{C}
P^2 \# \overline{\mathbb{C}P^2}$.

In summary, we can do iterative toric blow-downs from $(X_0,D_0,\omega_0)$ to obtain a
symplectic Looijenga pair $(X_b,D_b,\omega_b)$ such that  either $(X_b, \omega_b)$ is minimal or $X_b$ is diffeomorphic to $\mathbb{C}P^2 \# \overline{\mathbb{C}P^2}$.

 \end{proof}



From Lemma \ref{lemma: genus 1}, Lemma \ref{lemma: same type}, Lemma  \ref{lemma: fundamental theorem + minimal model} and adjunction formula,
we can enumerate the minimal symplectic log Calabi-Yau surfaces up to the homology of the irreducible components.



$\bullet$  Case $(A)$: The base genus of $X$ is $1$. $D$ is a torus.





$\bullet$  Case $(B)$: $X=\mathbb{C}P^2$. $c_1=3H$. Then the symplectic log Calabi-Yau are

$(B1)$ $D$ is a torus,

$(B2)$ $D$ consists of a $H-$sphere and a $2H-$sphere, or

$(B3)$ $D$ consists of three $H-$sphere.

$\bullet$ Case $(C)$: $X=\mathbb{S}^2 \times \mathbb{S}^2$, $c_1=2f+2s$, where $f$ and $s$ are homology class
of the two factors. By adjunction, the homology $af+bs$ of any embedded symplectic sphere satisfies $a=1$ or $b=1$.
Symplectic log Calabi-Yau surfaces are

$(C1)$ $D$ is a torus.

$(C2)$ If $D$ has two irreducible components $C_1$ and $C_2$, then the only possible case (modulo obvious symmetry) is $[C_1]=f+bs$ and $[C_2]=f+(2-b)s$.
Its graph  is given by
$$    \xymatrix{
       \bullet^{2b} \ar@{=}[r] & \bullet^{4-2b}
	}
$$


$(C3)$ If $D$ has three irreducible components $C_1$, $C_2$ and $C_3$, then the only possible case (modulo obvious symmetry) is $[C_1]=f+bs$, $[C_2]=f+(1-b)s$ and $[C_3]=s$.
Its graph is given by
$$    \xymatrix{
       \bullet^{2b} \ar@{-}[d] \ar@{-}[r] & \bullet^{2-2b} \ar@{-}[dl] \\
       \bullet^{0}
	}
$$

$(C4)$ If $D$ has four irreducible components, then the only possible case (modulo obvious symmetry) is $[C_1]=f-bs$, $[C_2]=f+bs$, $[C_3]=s$ and $[C_3]=s$.
Its graph is given by
$$    \xymatrix{
       \bullet^{2b} \ar@{-}[d] \ar@{-}[r] & \bullet^{0} \ar@{-}[d] \\
       \bullet^{0} \ar@{-}[r] & \bullet^{-2b} \\
	}
$$

It is not hard to draw contradiction if $D$ has $5$ or more irreducible components.



$\bullet$ Case $(D)$: $X=\mathbb{C}P^2 \# \overline{\mathbb{C}P^2}$. $c_1=f+2s$, where $f$ and $s$ are fiber class and section class, respectively,
such that $f^2=0$, $f\cdot s=1$ and $s^2=1$.
By adjunction, the homology $af+bs$ of an embedded symplectic sphere satisfies $b=1$ or $b=2-2a$.

$(D1)$ $D$ cannot be a torus because it would not be minimal.

$(D2)$ If $D$ has two irreducible components $C_1$ and $C_2$, then the only two possible cases (modulo obvious symmetry) are
$([C_1],[C_2])=(af+s,(1-a)f+s)$ and $([C_1],[C_2])=(f,2s)$.
The graphs are given by
$$    \xymatrix{
       \bullet^{2a+1} \ar@{=}[r] & \bullet^{3-2a}
	}
$$
and
$$    \xymatrix{
      \bullet^{4} \ar@{=}[r] & \bullet^{0}
	}
$$

$(D3)$  If $D$ has three irreducible components, then the only possible case (modulo obvious symmetry) is $[C_1]=af+s$, $[C_2]=-af+s$ and $[C_3]=f$.
$$    \xymatrix{
       \bullet^{2a+1} \ar@{-}[d] \ar@{-}[r] & \bullet^{-2a+1} \ar@{-}[dl] \\
       \bullet^{0}  \\
	}
$$



$(D4)$ If $D$ has four irreducible components, then the only possible case (modulo obvious symmetry) is $[C_1]=af+s$, $[C_2]=-(a+1)f+s$, $[C_3]=f$ and $[C_4]=f$.
$$    \xymatrix{
       \bullet^{2a_1+1} \ar@{-}[d] \ar@{-}[r] & \bullet^{0} \ar@{-}[d] \\
       \bullet^{0} \ar@{-}[r] & \bullet^{-2a_1-1} \\
	}
$$

It is not hard to draw contradiction if $D$ has $5$ or more irreducible components.

\subsection{Deformation classes of minimal models}\label{section: Deformation Problem for minimal models}

In this section, we study the symplectic deformation classes of minimal symplectic log Calabi-Yau surfaces.


\begin{prop}\label{prop: isotopy class in minimal model}
Let $(X,D=C_1 \cup \dots \cup C_k,\omega)$ be a minimal symplectic log Calabi-Yau surface.
If $\overline{D}=\overline{C_1} \cup \dots \cup \overline{C_k} \subset (X,\omega)$ is another symplectic divisor representing the first Chern class such that $[C_i]=[\overline{C_i}]$ for all $i$.
Then $(X,D,\omega)$ is symplectic deformation equivalent to $(X,\overline{D},\omega)$.
\end{prop}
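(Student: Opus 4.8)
The plan is to prove the stronger statement that $D$ and $\overline{D}$ are symplectic isotopic, through a smooth family of $\omega$-symplectic divisors, which in particular yields the desired symplectic deformation equivalence. By Lemma \ref{lem: def versus marked def} and Lemma \ref{lem: as a marked divisor} this is exactly what is needed, and after applying Lemma \ref{lem: perturbation} I may assume both divisors are $\omega$-orthogonal from the start. The central device is a pseudoholomorphic path argument: by Lemma \ref{lem: existence of nice J-sphere} choose an $\omega$-compatible almost complex structure $J^0$ making $D$ holomorphic and $J^1$ making $\overline{D}$ holomorphic, and join them by a generic path $\{J^t\}$ in the connected space of $\omega$-compatible almost complex structures. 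The goal is to show that each class $[C_i]$ carries a $J^t$-holomorphic representative, and that these representatives assemble into a symplectic divisor for every $t$; since every $J^t$ is $\omega$-compatible, the resulting curves are automatically $\omega$-symplectic, and the family sweeps out a symplectic isotopy from $D$ to $\overline{D}$.

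To control the individual components I would split along the enumeration in Cases $(A)$--$(D)$. For spheres, adjunction gives $[C_i]\cdot\sum_{j\neq i}[C_j]=2$, hence $c_1\cdot[C_i]=[C_i]^2+2$. When $[C_i]^2\geq -1$ this is $\geq 1$, so the class enjoys automatic transversality (Hofer--Lizan--Sikorav) and, for generic $J^t$, has a unique embedded representative of the expected dimension; positivity of intersection then forces the representatives to meet transversally and positively exactly as prescribed by the cycle, so they form a symplectic divisor with no extra bookkeeping. The components with $[C_i]^2\leq -2$, which occur only in the ruled cases $(A)$, $(C)$ and $(D)$, fall outside automatic transversality, and for these I would exploit the $S^2$-ruling: the fiber class determines a $J^t$-holomorphic foliation (McDuff), and the negative components appear as sections organized rigidly by this foliation, a section in a fixed class being pinned down by its single intersection with each fiber. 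For Case $(B)$ the ambient manifold is $\mathbb{C}P^2$, the sphere components are lines and conics, and classical Gromov uniqueness of holomorphic lines and conics handles the configuration directly.

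I expect the principal obstacle to be twofold. First, along the path $\{J^t\}$ one must rule out bubbling and breaking of the configuration; here I would combine Gromov compactness with positivity of intersection and the fact, already isolated in the proof of Lemma \ref{lemma: fundamental theorem + minimal model}, that a minimal model admits no exceptional class pairing non-negatively with all components of $D$. Together with the explicit list of admissible classes in Cases $(A)$--$(D)$, this should exclude every candidate limit configuration and show the $J^t$-curves persist and vary smoothly. Second, and more seriously, the torus cases $(A)$, $(B1)$ and $(C1)$ lie genuinely outside this scheme, because a torus component has $c_1\cdot[T]=[T]^2=0$, so its moduli space has index $0$ and the curve is rigid rather than deformable along a path. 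For these I would instead invoke the known symplectic isotopy uniqueness of the relevant tori---a smooth cubic in $\mathbb{C}P^2$, a bisection in the elliptic ruled surface, and the anticanonical torus $2f+2s$ in $\mathbb{S}^2\times\mathbb{S}^2$---thereby reducing the configuration statement to a single-curve isotopy. Verifying that these external uniqueness inputs cover precisely the classes produced by the enumeration, and splicing the resulting single-curve isotopies together with the sphere-configuration arguments into one coherent family, is the delicate heart of the proof.
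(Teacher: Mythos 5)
Your overall scheme---a generic path $\{J^t\}$ of compatible almost complex structures joining one making $D$ holomorphic to one making $\overline{D}$ holomorphic, with persistence of each component class along the path---breaks down exactly at the place you flag as needing the ruling, and the fix you sketch does not work. A sphere component with $[C_i]^2\le -2$ has $c_1([C_i])=[C_i]^2+2\le 0$, so the (unparametrized) index $2c_1([C_i])-2$ is negative and for a \emph{generic} $J^t$ the class has no holomorphic representative at all; the fiber foliation does exist for each $J^t$, but it does not produce a section in a prescribed negative class, so nothing is ``pinned down.'' Such classes occur in the minimal models (e.g.\ $[C_2]=-(a+1)f+s$ in case $(D4)$, with square $-2a-1$ arbitrarily negative). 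The paper's route around this is essentially the opposite of genericity: it invokes Abreu--McDuff (Proposition \ref{prop: M-O}) that the \emph{non-generic} stratum $U_k$ of compatible $J$ representing the negative class $E_j$ is nonempty and path-connected, upgrades this to a Hamiltonian diffeomorphism matching the negative components of $D$ and $\overline{D}$ exactly (Lemma \ref{lem: negative curve in sphere bundle over sphere completely coincide}), and only then runs a parametrized moduli argument for the remaining \emph{positive} component, restricted to almost complex structures fixed near the now-common negative curve, with an explicit index count over all decompositions $[C_2]=(s_1F+E_0)+s_2F+\dots+s_mF$ showing degenerations have codimension at least two. Your proposed bubbling control (``no exceptional class pairs non-negatively with all components'') addresses only exceptional classes and does not exclude these more general degenerations.

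The second genuine gap is case $(A)$. You propose to quote ``known symplectic isotopy uniqueness'' of the anticanonical bisection in an elliptic ruled surface, but no such result is available (the cited Sikorav and Siebert--Tian results cover tori in rational surfaces only), and indeed the proposition as stated claims only deformation equivalence, not isotopy. The paper's proof of this case (Proposition \ref{prop: torus type}) is structurally different from everything in your outline: it uses Usher's description of $X-P(D)$ as an explicit annulus bundle over the torus, builds a fiber-preserving diffeomorphism $\tilde F$ of $X$ with $\tilde F(D)=\overline{D}$, and then interpolates between $\omega$ and $\tilde F^*\omega$ through a family of (possibly non-cohomologous) symplectic forms keeping $D$ symplectic via the Thurston trick (Lemma \ref{lem: Thurston trick}). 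Without either this construction or a genuine isotopy theorem for such bisections, your argument does not close in the elliptic ruled case.
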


The proof of Proposition \ref{prop: isotopy class in minimal model} is separated into two cases,
Proposition \ref{prop: Proof for symplectic Looijenga pairs} and Proposition \ref{prop: torus type}.

\subsubsection{Isotopy in rational surfaces}


\begin{prop}\label{prop: Proof for symplectic Looijenga pairs}
Suppose $(X, D, \omega)$ and $(X, \overline{D}, \omega)$ satisfy the assumtion of Proposition \ref{prop: isotopy class in minimal model} such that, in addition, $X$ is rational, then
$D$ is symplectic isotopic to $\overline{D}$.

\end{prop}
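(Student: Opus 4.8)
The plan is to find a single one-parameter family of $\omega$-compatible almost complex structures along which both configurations become pseudo-holomorphic and can be connected through $J_t$-holomorphic divisors; since every $J_t$-holomorphic curve is $\omega$-symplectic and $\omega$ is held fixed, such a family is automatically a symplectic isotopy. First I would use Lemma \ref{lem: existence of nice J-sphere} together with positivity of intersection to choose $\omega$-compatible $J_0$ and $J_1$ making $D$ and $\overline{D}$, respectively, simultaneously pseudo-holomorphic, and join them by a generic path $J_t$ in the (connected) space of $\omega$-compatible almost complex structures. The problem then reduces to propagating the whole configuration across this path.

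The central device is a rational ruling. In each of the three minimal rational cases ($X=\mathbb{C}P^2$, $\mathbb{S}^2\times\mathbb{S}^2$, $\mathbb{C}P^2\#\overline{\mathbb{C}P^2}$) there is a distinguished low-area class $F$---the fiber class $f$ or $s$ in the ruled cases and the line class $H$ in $\mathbb{C}P^2$---whose minimal-area embedded representative foliates $X$ by $J_t$-holomorphic spheres, varying continuously with $t$, by the structure results of McDuff and Pinsonnault (cf. Lemma $1.2$ of \cite{Pi08}). By positivity of intersection each component satisfies $[C_i]\cdot F\in\{0,1,2\}$ (respectively $3$ for the cubic torus in $\mathbb{C}P^2$), so every $C_i$ is a fiber, a section, a bisection, or a trisection of the ruling; this type is read off from the homology class and therefore agrees for $C_i$ and $\overline{C_i}$.

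Next I would carry the components along the path $J_t$ inside this moving foliation. Fibers are literally leaves of the foliation and move with it; sections form a connected, regular moduli space in a ruled surface and likewise move continuously. The remaining components---bisections, and the tori of cases $(B1)$ and $(C1)$---are embedded curves of genus at most one with $c_1\geq 1$, hence automatically regular in real dimension four, so their moduli spaces over the path $J_t$ are smooth cobordisms. Fitting the leaves, sections and higher components together produces a family $D_t$ of $J_t$-holomorphic symplectic divisors from $D$ to $\overline{D}$, realizing the desired symplectic isotopy.

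The main obstacle will be controlling degenerations along the path and preserving the combinatorial type of the configuration. I must rule out that a bisection or the anticanonical torus breaks into a fiber-plus-section or a nodal configuration as $t$ varies, and I must keep the cyclic intersection pattern of the $C_i$ intact throughout; this is forced by positivity of intersection, the minimality of the area of $F$, and the $b^+=1$ wall-crossing and Gromov-invariant constraints that pin down which classes admit connected embedded representatives. The genus-one cases $(B1)$ and $(C1)$ are the most delicate, since they require symplectic isotopy uniqueness for these specific anticanonical classes rather than the cleaner sphere arguments available in the remaining cases.
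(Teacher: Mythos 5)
Your overall strategy---propagate the configuration along a path of almost complex structures via parametrized moduli spaces---is the right technology, but it has a genuine gap at the negative self-intersection components. The minimal models include components in classes such as $E_0-kF$ or $E_1-kF$ with self-intersection $\le -2$ (cases $(C2)$--$(C4)$, $(D2)$--$(D4)$ with $|b|$ or $|a|$ large); these have Fredholm index $2c_1-2<0$, so they are \emph{not} regular and have no $J$-holomorphic representative for a generic $J$. A ``generic path $J_t$ in the connected space of $\omega$-compatible almost complex structures'' will simply leave the stratum $U_k$ on which such a class is represented, and there is then nothing to carry along. Your claim that ``sections form a connected, regular moduli space in a ruled surface and likewise move continuously'' is false precisely for these negative sections. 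The paper's proof avoids this by a different order of operations: it first invokes Abreu--McDuff (connectedness of $U_k$, Proposition \ref{prop: M-O}) to produce a Hamiltonian diffeomorphism making the negative components of $D$ and $\overline{D}$ coincide \emph{exactly} (Lemma \ref{lem: negative curve in sphere bundle over sphere completely coincide}), and then restricts to the space $\mathcal{J}$ of almost complex structures agreeing with a fixed $J_0$ near that component, so the negative curve stays holomorphic along the whole path by construction rather than by genericity. Only the remaining (non-negative) components are then isotoped.

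The second issue is that you defer the real work. Ruling out breaking of the positive component into fiber-plus-section or other nodal configurations is not ``forced by positivity of intersection and wall-crossing''; it requires the explicit index count showing every degeneration of $[C_2]$ occurs in codimension at least two (the computation comparing $Ind_{nodal}$ with $Ind_{C_2}$ in the paper), combined with automatic transversality for the point-constrained problem and a regular path argument. Without that count, compactness of the one-dimensional parametrized moduli space---and hence the existence of the isotopy---is unproven. Your flagging of the torus cases $(B1)$, $(C1)$ is fair; the paper handles those by citing Sikorav and Siebert--Tian rather than reproving them.
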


The proof of Proposition \ref{prop: Proof for symplectic Looijenga pairs} when $D$ is a torus is given by \cite{Si03} and Theorem B and Theorem C of \cite{SiTi05}.
We only need to  deal with symplectic Looijenga pairs.



Recall that cohomologous symplectic forms on a rational or ruled 4-manifold
are symplectomorphic (cf. \cite{Ta95}, \cite{LaMc96-2}  and the survey \cite{Sa13}).
Therefore it suffices to consider   the following 'standard symplectic models' for  $\mathbb{S}^2 \times \mathbb{S}^2$,  $\mathbb{C}P^2$  and $\mathbb{C}P^2 \# \overline{\mathbb{C}P^2}$.






$\bullet$ $\mathbb{S}^2 \times \mathbb{S}^2$ model:

When $X$ is diffeomorphic to $\mathbb{S}^2 \times \mathbb{S}^2$, we define the product symplectic form $\omega_{\lambda}=(1+\lambda)\sigma \times \sigma$ with $\sigma$ a symplectic form on the second factor with area $1$ and $\lambda \ge 0$.
Let $E_0$ be the class of the first factor, $F$ be the class of the second factor and $E_{2k}=E_0-kF$ for $0 \le k \le l$, where $l$ is the integer with $l-1 < \lambda \le l$.
For $0 \le k \le l$, let $U_k$ be the set of $\omega_{\lambda}$-compatible almost complex structure such that $E_{2k}$ is represented by an embedded pseudo-holomorphic sphere.

$\bullet$ $\mathbb{C}P^2$ model:

When $X$ is diffeomorphic to $\mathbb{C}P^2 $, we use   a multiple of the Fubini-Study form,  $c\omega_{FS}$.

$\bullet$ $\mathbb{C}P^2 \# \overline{\mathbb{C}P^2}$ model:

When $X$ is diffeomorphic to $\mathbb{C}P^2 \# \overline{\mathbb{C}P^2}$, we use $\omega_{\lambda}$ to denote a form obtained by blowing up $(\mathbb{C}P^2, (2+\lambda)\omega_{FS})$ with size $1+\lambda$.
So the line class $H$ has area $2 +\lambda$ and the exceptional class $E_1$ has area $1+\lambda$, where $\lambda > -1$.
Let $F=H-E_1$ be the fiber class and let also $E_{2k+1}=E_1-kF$ for $0 \le k \le l$, where $l$ is again the integer with $l-1 < \lambda \le l$.
Similarly,  let $U_k$ be the space of $\omega_{\lambda}$-compatible almost complex structure such that $E_{2k+1}$ is represented by an embedded pseudo-holomorphic sphere.

\begin{prop}\label{prop: M-O}(Proposition 2.3 and Corollary 2.8 of \cite{AbMc00}, see also Proposition 6.4 of \cite{LiWu12})
Let $(X,\omega_{\lambda})$ be one of the above two cases.
For each $0\le k \le l$, $U_k$ is non-empty and path connected.
As a result, any two embedded symplectic spheres $C_0$ and $C_1$ representing the same class $E_j$ for some $0\le j \le 2l+1$ are symplectic isotopic to each other.
\end{prop}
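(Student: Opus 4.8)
The plan is to handle the two assertions separately. The non-emptiness and path connectedness of each $U_k$ is the structural input I would import from \cite{AbMc00} (Proposition 2.3 and Corollary 2.8), with \cite{LiWu12} (Proposition 6.4) as an alternative reference. Non-emptiness is witnessed by the integrable complex structures coming from the appropriate (even, resp. odd) Hirzebruch surface, for which $E_{2k}$ (resp. $E_{2k+1}$) is realized as the holomorphic negative section; path connectedness rests on the analysis in \cite{AbMc00} of the stratification of the space $\mathcal{J}_{\omega_\lambda}$ of $\omega_\lambda$-compatible almost complex structures by the negative sphere classes that acquire pseudo-holomorphic representatives. I regard this as the genuinely deep part and would not reprove it.

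Granting this, I would deduce the isotopy statement as follows. Given an embedded symplectic sphere $C$ in the class $E_j$, there is an $\omega_\lambda$-compatible almost complex structure for which $C$ is pseudo-holomorphic, since the set of compatible $J$ preserving the tangent spaces of a fixed symplectic submanifold is non-empty (cf. \cite{McSa98}); any such $J$ lies in $U_k$, where $j=2k$ in the $\mathbb{S}^2\times\mathbb{S}^2$ model and $j=2k+1$ in the $\mathbb{C}P^2\#\overline{\mathbb{C}P^2}$ model. Applying this to $C_0$ and $C_1$ produces $J_0,J_1\in U_k$, and path connectedness supplies a path $\{J_t\}\subset U_k$ joining them. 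By the defining property of $U_k$, the class $E_j$ carries an embedded $J_t$-holomorphic representative $C_t$ for every $t$.

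It remains to promote $\{C_t\}$ to a smooth isotopy. For every relevant class except $E_0$ in the $\mathbb{S}^2\times\mathbb{S}^2$ model one has $E_j^2<0$, so positivity of intersection forces the irreducible embedded representative to be unique; Gromov compactness together with the uniqueness just noted shows these representatives vary continuously in $t$, and the standard Fredholm theory for the universal moduli space (varying $J$) upgrades continuity to smoothness, giving the symplectic isotopy from $C_0$ to $C_1$. The single remaining class $E_0$ has self-intersection zero and $c_1\cdot E_0=2$, so its representatives form the fibers of a ruling; here I would track a $J_t$-fiber and then use that the fibers of the terminal ruling form a connected family to reach $C_1$. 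The main obstacle is entirely contained in the first assertion, the path connectedness of $U_k$, which is why I quote \cite{AbMc00}; within the consequence the negative self-intersection of the classes makes the uniqueness and persistence of the representative essentially automatic, so no serious difficulty arises there.
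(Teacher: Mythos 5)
Your proposal is correct and matches the paper, which offers no proof of its own for this proposition but simply cites Proposition 2.3 and Corollary 2.8 of \cite{AbMc00} (and Proposition 6.4 of \cite{LiWu12}) for exactly the structural input you isolate, namely the non-emptiness and path connectedness of the strata $U_k$. Your deduction of the isotopy consequence --- uniqueness of the embedded representative of a negative square class by positivity of intersection along a path $\{J_t\}\subset U_k$, plus the separate ruling argument for $E_0$ --- is the standard one implicit in those references, so there is nothing to add.
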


\begin{lemma}\label{lem: negative curve in sphere bundle over sphere completely coincide}
Let $(X,\omega_{\lambda})$ be as in Proposition \ref{prop: M-O}.
Assume $C_0,C_1 \subset X$ are two embedded symplectic spheres representing the same class $E_j$ for some $0\le j \le 2l+1$.
Then there is a Hamiltonian diffeomorphism of $(X,\omega_{\lambda})$ sending $C_0$ to $C_1$.
\end{lemma}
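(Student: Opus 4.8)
The plan is to upgrade the symplectic isotopy provided by Proposition \ref{prop: M-O} to an ambient Hamiltonian isotopy, using the symplectic isotopy extension theorem together with the fact that the models under consideration have vanishing first real cohomology. By Proposition \ref{prop: M-O}, since $C_0$ and $C_1$ are embedded symplectic spheres in the same class $E_j$, they are symplectic isotopic; choose a smooth family of symplectic embeddings $\iota_t\colon S^2\to (X,\omega_\lambda)$, $t\in[0,1]$, with images $C_t$, $\iota_0$ parametrizing $C_0$ and $\iota_1$ parametrizing $C_1$. First I would observe that, along this family, both the symplectic area $\int_{C_t}\omega_\lambda=\omega_\lambda(E_j)$ and the normal Euler number $[C_t]^2=E_j^2$ are constant, since all $C_t$ represent the fixed class $E_j$. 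By the symplectic neighborhood theorem (cf. \cite{McSa98}), the germ of $\omega_\lambda$ along a symplectic sphere is determined up to symplectomorphism by these two numbers, so the symplectic normal data of $C_t$ is independent of $t$.

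Next I would apply the symplectic isotopy extension theorem (cf. \cite{McSa98}) to the family $\iota_t$. Because the symplectic neighborhoods of the $C_t$ are uniformly controlled by the constant area and Euler number, the theorem produces a smooth family of symplectomorphisms $\psi_t$ of $(X,\omega_\lambda)$ with $\psi_0=\mathrm{id}_X$ and $\psi_t(C_0)=C_t$ for all $t$; in particular $\psi_1(C_0)=C_1$. The isotopy $\{\psi_t\}$ is generated by a time-dependent symplectic vector field $Y_t$, characterized by $\tfrac{d}{dt}\psi_t=Y_t\circ\psi_t$ and $\mathcal{L}_{Y_t}\omega_\lambda=0$, so that $\iota_{Y_t}\omega_\lambda$ is a closed one-form for each $t$.

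Finally, since $X$ is diffeomorphic to $\mathbb{S}^2\times\mathbb{S}^2$ or $\mathbb{C}P^2\#\overline{\mathbb{C}P^2}$, we have $H^1(X;\mathbb{R})=0$, so each closed one-form $\iota_{Y_t}\omega_\lambda$ is exact, say $\iota_{Y_t}\omega_\lambda=dH_t$ for a smooth family of functions $H_t$. Thus $Y_t$ is the Hamiltonian vector field of $H_t$, the isotopy $\{\psi_t\}$ is a Hamiltonian isotopy, and $\psi_1$ is the desired Hamiltonian diffeomorphism carrying $C_0$ to $C_1$. The main obstacle I anticipate is the first step: verifying that the symplectic isotopy extension theorem genuinely applies, i.e.\ that the constancy of area and self-intersection yields a uniform identification of tubular neighborhoods so that the ambient extension exists smoothly in $t$; once this neighborhood control is secured, the passage from symplectic to Hamiltonian isotopy is automatic from $H^1(X;\mathbb{R})=0$.
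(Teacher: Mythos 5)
Your overall strategy --- produce the isotopy from Proposition \ref{prop: M-O}, extend it to an ambient symplectic isotopy, and then use $H^1(X;\mathbb{R})=0$ to upgrade it to a Hamiltonian isotopy --- is the same as the paper's, and your first and last steps (constancy of the area and normal Euler number giving a uniform symplectic neighborhood; exactness of $\iota_{Y_t}\omega_\lambda$ from $H^1(X;\mathbb{R})=0$) are fine. The gap is exactly the step you flag as ``the main obstacle'': you invoke the symplectic isotopy extension theorem as a black box, but the hypothesis of that theorem (Banyaga's extension theorem, as stated in \cite{McSa98} and as used in the paper) is not uniform control of the tubular neighborhoods; it is the cohomological condition $H^2(X,Q;\mathbb{R})=0$ for the subset $Q$ being isotoped. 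For $Q=C_0$ a single sphere this fails: $H^1(C_0;\mathbb{R})=0$ while the restriction $H^2(X;\mathbb{R})\to H^2(C_0;\mathbb{R})$ has one-dimensional kernel (as $b_2(X)=2$), so the long exact sequence of the pair gives $H^2(X,C_0;\mathbb{R})\cong\mathbb{R}\neq 0$. The theorem you cite therefore does not apply as stated, and this is precisely where all of the work in the paper's proof is concentrated.

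The paper's remedy is to enlarge the configuration being isotoped: it chooses a smooth family of compatible almost complex structures $J_t$ adapted to the moving sphere, uses the Gromov--Witten invariant of the fiber class $F$ with a point constraint (plus Gromov compactness) to produce a family of embedded fiber spheres $C^F_t$ through a fixed point of $C_0$, makes the intersections $\omega_\lambda$-orthogonal via Lemma 3.2.1 of \cite{McOp13}, and then applies Banyaga's theorem to the isotopy of the union $C_t\cup\Phi(C^F_t,t)$. Since $[C_0]=E_j$ and $[C^F_0]=F$ span $H_2(X;\mathbb{R})$, the restriction $H^2(X;\mathbb{R})\to H^2(C_0\cup C^F_0;\mathbb{R})$ is an isomorphism and $H^1(C_0\cup C^F_0;\mathbb{R})=0$, whence $H^2(X,C_0\cup C^F_0;\mathbb{R})=0$ and the extension theorem applies; the final passage to a Hamiltonian isotopy then uses $H^1(X;\mathbb{R})=0$ as you say. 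If you wanted to avoid the auxiliary fiber spheres you would have to reprove the extension by hand --- for instance, note that the generating one-form $\iota_{X_t}\omega_\lambda$ of the neighborhood isotopy is closed on a tubular neighborhood of $C_t$, hence exact there because $H^1(S^2;\mathbb{R})=0$, and cut off the resulting locally defined Hamiltonian --- but either way this step requires an actual argument rather than a citation, and as written your proof is incomplete at exactly the point carrying the lemma's content.
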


\begin{proof}
By Proposition \ref{prop: M-O}, we can find a symplectic isotopy $C_t \subset X$ from $C_0$ to $C_1$.
We can extend this symplectic isotopy from a neighborhood of $C_0$ to a neighborhood of $C_1$ by a Moser type argument(See e.g. Chapter $3$ of \cite{McSa98}).
Our aim is to extend this symplectic isotopy to an ambient symplectic isotopy in order to obtain the result.

We first extend this symplectic isotopy to an ambient diffeomorphic isotopy $\Phi: X \times [0,1] \to X$.
By considering the pull-back form $\Phi^*\omega_{\lambda}$, we can identify $C_0=\Phi_t^{-1}(C_t)$ for all $t$ in the family of symplectic manifold
$(X \times \{t\}, \Phi^*\omega_{\lambda}|_{X \times \{t\}})$, as in Lemma \ref{lem: def versus marked def}.
We denote $\Phi^*\omega_{\lambda}|_{X \times \{t\}}$ as $\omega_{\lambda}^t$.
By definition, $\omega_{\lambda}^t$ is fixed near $C_0$ for all $t$.
Identify a tubular neighborhood of $C_0$ with a symplectic normal bundle.
Then, choose a smooth family of $\omega_{\lambda}^t$-compatible almost complex structure $J_t$ on $X$ such that $J_t$ is fixed near $C_0$ and the
fibers of the normal bundle of $C_0$ are $J_t$-holomorphic.
Pick a point $p_0$ on $C_0$.
Let the $J_t$ holomorphic sphere representing the fiber class $F$ and passing through $p_0$ be $C^F_t$.
Since the fiber class with a single point constraint has Gromov-Witten invariant one or minus one, $C^F_t$ forms a symplectic isotopy by Gromov compactness.
By Lemma 3.2.1 of \cite{McOp13} (let $C_0$ be $C^{S_1}$ and $[C^F_t]$ be $B_1$), we can
assume that the intersection between $C_0$ and $C^F_t$ is $\omega_{\lambda}^t$-orthogonal, after possibly perturbing $J_t$.

Now, $\Phi(C_0,t) \cup \Phi(C^F_t,t)=C_t \cup \Phi(C^F_t,t)$ is an $\omega_{\lambda}$ orthogonal symplectic isotopy
in $(X,\omega_{\lambda})$ (Strictly speaking, $C^F_t$ is the image of another diffeomorphic isotopy $\Psi$ such that
$C^F_t=\Psi(C^F_0,t)$ and $C_0=\Psi(C_0,t)$, then the isotopy we want is $\Phi(\Psi(\cdot,t),t)$).
We can extend this symplectic isotopy to a neighborhood of it by another Moser type argument since $\Phi(C_0,t)$ intersects $\Phi(C^F_t,t)$ $\omega_{\lambda}$-orthogonally.
We have the exact sequence
$$H^1(C_0 \cup C^F_0,\mathbb{R})=0 \to H^2(X,C_0 \cup C^F_0,\mathbb{R}) \to H^2(X,\mathbb{R}) \to H^2(C_0 \cup C^F_0,\mathbb{R})$$
where the last arrow is an isomorphism and hence $H^2(X,C_0 \cup C^F_0,\mathbb{R})=0$.
By Banyaga extension theorem (See e.g. \cite{McSa98}), there is an ambient symplectic isotopy agree with the symplectic isotopy $C_t \cup \Phi(C^F_t,t)$.
Finally, this ambient symplectic isotopy is a Hamiltonian isotopy because $H^1(X)=0$.
\end{proof}

\begin{proof} [Proof of Proposition \ref{prop: Proof for symplectic Looijenga pairs}]

As seen in the previous section, $D$ and $\overline{D}$ have at most four irreducible components.
We are going to prove Proposition \ref{prop: Proof for symplectic Looijenga pairs} by dividing it into the cases of two, three or four irreducible components.
The proof for bigons is written with details,
while the proof for triangles or rectangles being similar to that of bigons will be sketched.

$\bullet$ Bigons

First, let $(X,\omega)=(\mathbb{S}^2 \times \mathbb{S}^2, c\omega_{\lambda})$ for some constant $c$, $D=C_1 \cup C_2$, $\overline{D}=\overline{C_1} \cup \overline{C_2}$ and
$[C_i]=[\overline{C_i}]$ for $i=1,2$.
Without loss of generality, we may assume $[C_1]^2 \le [C_2]^2$.
From the enumeration, we have $[C_1]=F+(2-b_1)E_0$ and $[C_2]=F+b_1E_0$ for some $b_1 \ge 1$, or $[C_1]=(2-a_1)F+E_0$ and $[C_2]=a_1F+E_0$ for some $a_1 \ge 1$.
We consider the latter case and the first case can be treated similarly.

We first consider $a_1 \ge 2$.
By Lemma \ref{lem: negative curve in sphere bundle over sphere completely coincide},
after composing a Hamiltonian diffeomorphism, we can assume $C_1$ and $\overline{C_1}$ completely coincide.
Fix an $\omega$-tamed almost complex structure $J_0$ making $C_1=\overline{C_1}$ pseudo-holomorphic and integrable near $C_1$.
Consider the set of $\omega$-tamed almost complex structure $\mathcal{J}$ agree with $J_0$ near $C_1$.
Fix $J\in \mathcal{J}$, we want to inspect all possible degenerations of $J$-holomorphic nodal curve representing $[C_2]$.
By positivity of intersection, positivity of area and adjunction, the homology class $aF+bE_0$ of any $J$-holomorphic curve has
non-negative coefficient for the $E_0$ factor (i.e. $b\ge 0$).
Therefore, the irreducible components (possibly not simple) of any $J$-holomorphic curve representing $[C_2]$ give rise to a decomposition
$[C_2]=(s_1F+E_0)+s_2F+\dots+s_mF$, where $s_j > 0$ for $2 \le j \le m$
(by positivity of intersection with $[C_1]$).
If $s_1 \le 0$, then $s_1F+E_0=[C_1]$ by positivity of intersection with $[C_1]$.
The sum of non-negative Fredholm index of the underlying curve of each individual component is given by
$Ind_{nodal}=(4s_1+2)+2(m-1)$
when $s_1 \ge 0$, and
$Ind_{nodal}=2(m-1)$
when $s_1 < 0$
because the class $s_1F+E_0$ is primitive and the underlying curve for $s_jF$ has homology $F$ (the index formula for a pseudo-holomorphic curve with class $A$ is $2c_1(A)-2$).
On the other hand, the index of the class $[C_2]$ is given by
$Ind_{C_2}=2(2a_1+2)-2=4(\sum_{i=1}^m s_i)+2 = (4s_1+2)+4(\sum_{i=2}^m s_i)$.
If $s_1 \ge 0$ and $m\ge 2$, we have
$$Ind_{nodal}+2 \le (4s_1+2)+4(\sum\limits_{i=2}^m s_i) = Ind_{C_2}$$
If $s_1 < 0$, we have $s_1=2-a_1$ and hence
$$Ind_{nodal}+2 = 2(m-1)+2 \le 2(\sum\limits_{i=2}^m s_i)+2=2(a_1-(2-a_1))+2=4a_1-2 <Ind_{C_2}$$
Therefore, any degeneration happens in codimension two or higher.

Then we can apply the standard pseudo-holomorphic curve argument to obtain a symplectic isotopy from $C_2$ to $\overline{C_2}$
transversal to $C_1$ for all time along the isotopy and finish the proof.
Since we could not find references that fit exactly to out situation (Proposition 1.2.9(ii) of \cite{McOp13} is a very closely related one), we provide some details here.
We will basically follow \cite{McSa12} together with Lemma 3.2.2 and Proposition 3.2.3 of \cite{McOp13}.

We perturb $C_2$ and $\overline{C_2}$ so that they have $2a_1+1$ distinct intersection points and call these intersection points $\{p_j\}_{j=1}^{2a_1+1}$.
We form the universal moduli space for genus $0$ curve representing the class $[C_2]$  with $2a_1+1$
point constraints  $\{p_j\}_{j=1}^{2a_1+1}$ with respect to the space of almost complex structures $\mathcal{J}$.
We want to pick $J,\overline{J} \in \mathcal{J}$ that are regular for all underlying (marked) simple curves
appearing in a degeneration of $[C_2]$ except $C_1=\overline{C_1}$ such that $C_2$ is $J$-holomorphic and $\overline{C_2}$ is $\overline{J}$-holomorphic.

To find $J$ and $\overline{J}$, we note the following two facts.
For any $J \in \mathcal{J}$ (resp. $\overline{J} \in \mathcal{J}$) making $C_2$ $J$-holomorphic
(resp. making $\overline{C_2}$ $\overline{J}$-holomorphic), the Fredholm operator taking the point constraints
$\{p_j\}_{j=1}^{2a_1+1}$ into account is regular by automatic transversality (See Theorem 3.1 and Proposition 3.2 of \cite{LiWu12}, and also \cite{HoLiSi97}, \cite{IvSh99}).
On the other hand, for a generic choice of
$J$ (resp. $\overline{J}$) making $C_1$ and $C_2$ $J$-holomorphic (resp. $C_1=\overline{C_1}$ and
$\overline{C_2}$ $\overline{J}$-holomorphic), each simple curve
other than $C_1$ and $C_2$ (resp. other than $C_1$ and $\overline{C_2}$) in any degeneration has a somewhere injective point away from $C_1$ and $C_2$
(resp. away from $\overline{C_1}$ and $\overline{C_2}$)
and hence is regular (See Chapter 3.4 of \cite{McSa12}).
As a result, we can find  $J,\overline{J} \in \mathcal{J}$ as desired.

For such $J,\overline{J}$, there is a regular smooth path $J_t \in \mathcal{J}$ (regular in the sense of Definition 6.2.10 of \cite{McSa12}) such that the parametrized moduli space of
$J_t-$holomorphic curves representing $[C_2]$
and passing through  $\{p_j\}_{j=1}^{2a_1+1}$ forms a non-empty one dimensional smooth manifold.
Since degeneration happens in codimension $2$ or higher, if we choose $J_t$ to be also regular
with respect to the lower strata, the one dimensional moduli space is also compact.

Thus, there is a family of embedded $J_t$-holomorphic spheres $C^t$ all of which passing through $\{p_j\}_{j=1}^{2a_1+1}$.
By positivity of intersection, $C^t$ is the only $J_t$-holomorphic family passing through $\{p_j\}_{j=1}^{2a_1+1}$, hence
we have a symplectic isotopy from $C_2$ to $\overline{C_2}$.
Finally, by applying Lemma 3.2.1 of \cite{McOp13} to $\{C^t\}$
to get another symplectic isotopy $\{C^{t'}\}$ transversal to $C_1$, we get that the intersection pattern of $\{C^{t'}\} \cup C_1$ is unchanged  along the symplectic isotopy.
This finishes the proof  when $a_1 \ge 2$.

The case that $a_1=1$ can be treated similarly, which is easier and only requires an analogue of
Proposition \ref{prop: M-O} and Lemma \ref{lem: negative curve in sphere bundle over sphere completely coincide} for symplectic sphere with non-negative self-intersection (See e.g Proposition 3.2 of \cite{LiWu12}).

Now, we consider $(X,\omega)=(\mathbb{C}P^2 \# \overline{\mathbb{C}P^2}, c\omega_{\lambda})$ for some constant $c$,
$D=C_1 \cup C_2$, $\overline{D}=\overline{C_1} \cup \overline{C_2}$ and
$[C_i]=[\overline{C_i}]$ for $i=1,2$.
By the enumeration, there are two possible cases.

The first one is when
$[C_1]=[\overline{C_1}]=(1-a_1)f+s=(2-a_1)F+E_{1}$
and $[C_2]=[\overline{C_2}]=a_1f+s=(a_1+1)F+E_{1}$.
By symmetry, it suffices to consider $a_1 \ge 1$.
If $a_1 \ge 2$, we apply Lemma \ref{lem: negative curve in sphere bundle over sphere completely coincide} and assume $C_1$ completely coincides with $\overline{C_1}$.
Again, we inspect all possible $J$-holomorphic degenerations of $C_2$ for $J$ making $C_1$ $J$-holomorphic.
A direct index count as before shows that any degeneration of $C_2$ has at least codimension two.
Therefore, the same method applies.
The case that $a_1=1$ is dealt similarly.

The other case is $[C_1]=[\overline{C_1}]=f=F$ and
$[C_2]=[\overline{C_2}]=2s=2F+2E_{1}$.
This cannot cause additional trouble as they have non-negative self-intersection numbers.
One can deal with this similar to the previous cases.

The case that $X=\mathbb{C}P^2$ is analogous and easier.

$\bullet$ Triangles and Rectangles

Now, we consider $X=\mathbb{S}^2 \times \mathbb{S}^2$ or
$X=\mathbb{C}P^2 \# \overline{\mathbb{C}P^2}$ and assume $D,\overline{D}$ has three or four irreducible components.
We observe that, there is at most one component with negative self-intersection number and one with positive self-intersection numbers in all cases.
Moreover, the homology class of the component with negative self-intersection number is of the form
$E_{i}+jF$ for some $j$ and $i=0,-1$.
If there is a negative self-intersection component, we can apply Lemma \ref{lem: negative curve in sphere bundle over sphere completely coincide} and
assume the negative self-intersection components for $D$ and $\overline{D}$ completely coincide.
Then we study all the possible $J$-holomorphic degeneration of the {\bf positive} curve for $J$ making the negative component $J$-holomorphic.
One can show that the degeneration happens in at least codimension two by index count.
Therefore, we can find a relative pseudo-holomorphic isotopy $\Phi_t$ from the positive self-intersection component of $D$ to the positive self-intersection component of $\overline{D}$.
At the same time, since the remaining components of $D$ and $\overline{D}$ are sphere fibers, which cannot have any pseudo-holomorphic degeneration,
the pseudo-holomorphic isotopy $\Phi_t$ can be extended to a pseudo-holomorphic isotopy from $D$ to $\overline{D}$.
Hence, the result follows when there is a negative self-intersection component.
The remaining cases are all similar and simpler, including the case when $X=\mathbb{C}P^2$.
\end{proof}

\subsubsection{Elliptic ruled surfaces}

In this subsection, we want to finish the proof of Proposition \ref{prop: isotopy class in minimal model} for the torus type.

\begin{prop}\label{prop: torus type}
Suppose $(X, D, \omega)$ and $(X, \overline D, \omega)$ are minimal symplectic log Calabi-Yau surfaces such that $X$ is elliptic ruled. Then they are symplectic deformation equivalent to each other.
\end{prop}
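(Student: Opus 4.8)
The plan is to exploit the $S^2$-ruling of $X$. By Lemma \ref{lemma: genus 1} and the enumeration in Case $(A)$, when $X$ is minimal elliptic ruled the divisors $D$ and $\overline{D}$ are each a single smooth symplectic torus representing $PD(c_1(X,\omega))$; in particular $[D]=[\overline D]$ and $D^2=\overline D^2=0$. Writing $f$ for the fiber class of the ruling, adjunction gives $c_1\cdot f=2$, hence $D\cdot f=2$. First I would choose an $\omega$-compatible $J$ making $D$ pseudo-holomorphic. By the structure theory of symplectic ruled surfaces (see \cite{McSa12} and the survey \cite{Sa13}), for such $J$ the manifold $X$ is $J$-holomorphically ruled: the class $f$ is represented by a smooth family of embedded $J$-spheres foliating $X$ and defining a projection $\pi\colon X\to\Sigma$ onto a torus $\Sigma$.

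By positivity of intersection, $\pi|_D\colon D\to\Sigma$ is a holomorphic map of degree $D\cdot f=2$, and since $\chi(D)=\chi(\Sigma)=0$, Riemann--Hurwitz forces it to be unbranched; as $D$ is connected it is a connected unbranched double cover of $\Sigma$, i.e.\ a double section of the ruling. The same holds for $\overline D$ relative to a compatible $J'$ and its ruling $\pi'$. Connected double covers of $\Sigma$ are classified by nonzero classes in $H^1(\Sigma;\mathbb{Z}/2)$, on which the mapping class group of $\Sigma$ acts transitively; realizing such a mapping class by a deformation of the ruling, I would normalize $D$ and $\overline D$ to be double sections of the same combinatorial type.

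To connect $D$ to $\overline D$, I would join $J$ to $J'$ by a generic path $\{J_t\}$, keeping the $J_t$-ruling $\pi_t$ throughout, and study the parametrized moduli space of genus-one $J_t$-holomorphic curves in the class $PD(c_1)$. The goal is to show that this space is a connected one-dimensional cobordism with ends $D$ and $\overline D$, all of whose members are embedded double sections; projecting via $\pi_t$ reduces this to a statement about $\Sigma$ and about sections of the ruling pulled back along the double cover, where uniqueness of holomorphic/symplectic sections in ruled surfaces can be invoked. Since Proposition \ref{prop: torus type} asks only for (not necessarily strict) symplectic deformation equivalence, I would use the freedom to vary $\omega_t$ together with the classification of symplectic forms on ruled surfaces up to deformation and diffeomorphism (\cite{LaMc96-2}, \cite{Sa13}) to place the whole family in a K\"ahler normal form, in which the anticanonical torus is a smooth elliptic curve and the remaining comparison is carried out with holomorphic automorphisms.

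The main obstacle is the analysis of this genus-one moduli space in the class $c_1$. In contrast to the rational cases of Proposition \ref{prop: Proof for symplectic Looijenga pairs}, which were handled by automatic transversality and index counts for spheres, here the domain carries a nontrivial modulus and the expected dimension is $0$, so both regularity and compactness are delicate. The key technical points are to rule out fiber-bubble degenerations of a class-$c_1$ curve---using $c_1\cdot f=2$, positivity of intersection with the fibers, and adjunction to force any limit to be a smooth embedded double section rather than a nodal configuration containing fiber components---and to establish connectedness of the moduli space. Reducing to the base torus via the unbranched double cover $D\to\Sigma$, or equivalently passing to the K\"ahler model and arguing with automorphisms, is what I expect to make these steps tractable.
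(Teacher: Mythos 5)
Your setup is right and matches the paper's: both $D$ and $\overline D$ are smooth tori in the class $PD(c_1)$, and positivity of intersection with the fiber plus Riemann--Hurwitz (this is Lemma 3.5 of \cite{Us09}, which the paper invokes) shows each is an unbranched double section of its ruling. But the core of your argument --- a parametrized moduli space of genus-one $J_t$-holomorphic curves in the class $PD(c_1)$ --- is where the gap lies. Since $X$ is minimal elliptic ruled, $c_1^2=0$, so the expected dimension of this moduli problem is $0$ and automatic transversality is unavailable for a genus-one class with $c_1(A)=0$; worse, even granting regularity and compactness, the parametrized moduli space over a path $\{J_t\}$ is a $1$-manifold whose \emph{connectedness} is exactly the statement you are trying to prove, and nothing in your outline supplies a mechanism for it: an index-zero problem can have many components, and $D$ and $\overline D$ could a priori lie on different ones. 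The appeal to ``uniqueness of sections'' after projecting by $\pi_t$ does not apply, since $D$ is a bisection rather than a section, and the suggested retreat to a K\"ahler normal form presupposes the deformation equivalence you want to establish.

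The paper's proof avoids holomorphic curve theory for the torus entirely. It observes that $X-P(D)$ is an annulus bundle over $\mathbb{T}^2$ whose monodromy is forced by $\pi_*u=2w$ (isotopic to the identity around one loop and to $z\mapsto z^{-1}$ around the other), hence $X-P(D)$ and $X-P(\overline D)$ are isomorphic annulus bundles; the isomorphism extends along the sphere foliation to a fiber-preserving diffeomorphism $\tilde F$ of $X$ with $\tilde F(D)=\overline D$. This reduces the problem to joining $\omega$ and $\tilde F^*\omega$ through symplectic forms keeping $D$ symplectic, which is done by the Thurston trick (Lemma \ref{lem: Thurston trick}); the price of that trick is that the cohomology class of the form moves, which is precisely why the conclusion is deformation equivalence rather than isotopy. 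To salvage a curve-theoretic route you would need, at a minimum, a substitute argument for connectedness of the space of embedded symplectic bisection tori in the class $c_1$, which is not supplied by the references you cite.
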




We first describe the complement of $D$ following \cite{Us09}.
Any $\omega$-compatible almost complex structure $J$ provides us a $J$-holomorphic ruling, meaning that there is a sphere bundle map $\pi: X \to \mathbb{T}^2$ such that fibers are $J$-holomorphic.
Usher proves in \cite{Us09} (Lemma 3.5) that, if $D$ is $J$-holomorphic, $\pi|_{D}$ is a two to one covering and in particular $D$ is transversal to the $J$-holomorphic sphere foliation.
If a tubular neighborhood of $D$ is taken out, we have a $J$-holomorphic annulus foliation, which defines an annulus bundle $X-P(D)$ over the torus $\mathbb{T}^2$.
We want to identify this annulus bundle.

Equip the orientation of $\mathbb{T}^2$ such that $\pi|_{D}$ is orientation preserving, where the orientation of $D$ is determined by $J$.
Choose a positively oriented basis $\{t,u\} \in H_1(D,\mathbb{Z})$ and $\{v,w\} \in H_1(\mathbb{T}^2,\mathbb{Z})$ such that $\pi_*t=v$ and $\pi_*u=2w$.
Let $\mathbb{A}=\{z \in \mathbb{C}| \frac{1}{2} \le |z| \le 2 \}$.
The monodromy of this annulus bundle around the loop corresponding to $v$ is orientation preserving and does not flip the boundary.
Therefore, the monodromy is isotopic to the identity.
Similarly, the monodromy of this annulus bundle around the loop corresponding to $w$ is orientation preserving but flip the boundary components due to $\pi_*u=2w$.
Therefore, the monodromy is isotopic to the map sending $z$ to $z^{-1}$.
This annulus bundle is isomorphic as an annulus bundle to (See the paragraph before Lemma 3.6 of \cite{Us09})
$$\mathbb{S}^1 \times \frac{\mathbb{R} \times \mathbb{A}}{(x+1,z) \sim (x,z^{-1})}$$
if $X$ is the smoothly trivial sphere bundle, and isomorphic to
$$\frac{\mathbb{R} \times \mathbb{S}^1 \times \mathbb{A}}{(x+1,e^{i\theta},z) \sim (x,e^{i \theta},e^{i \theta} z^{-1})}$$
if $X$ is the smoothly non-trivial sphere bundle.

Let $\overline{D}$ be another connected symplectic torus representing $c_1(X)$.
For $\overline{D}$, we can also define $\overline{J}$, $\overline{\pi}$, $\overline{\mathbb{T}^2}$, $\overline{t},\overline{u},\overline{v},\overline{w}$ as above.
Let $\tau:  \mathbb{T}^2 \to \overline{\mathbb{T}^2}$ be a diffeomorphism sending $v$ and $w$ to $\overline{v}$ and $\overline{w}$, respectively.
By construction, the pull-back annulus bundle $\tau^* (\overline{X}-P(\overline{D})) \to \mathbb{T}^2$ has the same monodromy (up to isotopy) as $X-P(D) \to \mathbb{T}^2$ over the one-skeleton.
The existence of an annulus bundle isomorphism from $X-P(D)$ to $\tau^* (X-\overline{D})$ covering the identity of $\mathbb{T}^2$ reduces to whether $X-P(D)$ and $\tau^* (\overline{X}-\overline{D})$ are isomorphic annulus bundle (covering some diffeomorphism of the base), which is true because there is only one class of isomorphic annulus bundle for a choice of monodromies over one skeleton (and it is explicitly described above in our case).
Therefore, we have a bundle isomorphism $F: X-P(D) \to X-P(\overline{D})$ covering $\tau$.
On the other hand, since the image of $\tau_* \circ \pi_*|_{H_1(D,\mathbb{Z})}$ equals the image of $\overline{\pi}_*|_{H_1(\overline{D},\mathbb{Z})}$,  there are two lifts of $\tau$ to $\tilde{\tau}_i: D \to \overline{D}$ such that $\overline{\pi} \circ \tilde{\tau}_i = \tau \circ \pi$, for $i=1,2$.
Then, there is a unique way, up to isotopy, to get a sphere bundle isomorphism $\tilde{F}: X \to X$ extending $F$ and $\tilde{\tau}_1$ (or, $F$ and $\tilde{\tau}_2$) by following the pseudo-holomorphic foliation.
In particular, we have $\tilde{F}(D)= \overline{D}$.

Using $\tilde{F}$, we can identify $\overline{D} \subset (X,\omega)$ with $D \subset (X,\tilde{F}^*\omega)$.
Proposition \ref{prop: torus type} will follow if we can find a symplectic deformation equivalence from $(X,D,\omega)$ to $(X,D,\tilde{F}^*\omega)$, which can be obtained by the following lemma.

\begin{lemma}\label{lem: Thurston trick}
Let  $\pi: (X,\omega_i,J_i) \to B$ be a symplectic surface bundle over surface such that
$J_i$ is $\omega_i$-compatible and fibers are $J_i$ holomorphic for both $i=0,1$.
Moreover, we assume the orientation of fibers induced by $J_0$ and $J_1$ are the same and the orientation of the total space induced by $\omega_0^2$ and $\omega_1^2$ are the same.
Assume $D \subset (X,\omega_i)$ is a $J_i$ holomorphic surface for $i=0,1$.
and $\pi|_D$ is submersive.
Then there is a smooth family of (possibly non-homologous) symplectic forms $\omega_t$ on $X$ making $D$ symplectic for all $t \in [0,1]$ joining $\omega_0$ and $\omega_1$.
\end{lemma}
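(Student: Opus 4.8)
The plan is to run a Thurston-type construction: interpolate $\omega_0$ and $\omega_1$ linearly, which controls the fiber directions, and restore nondegeneracy in the horizontal directions by adding a large multiple of an area form pulled back from $B$. Set $\mu_t=(1-t)\omega_0+t\omega_1$. Each $\mu_t$ is closed, but need not be nondegenerate on $X$; the point is that it is nondegenerate in the two directions that matter. On a fiber $f$, both $\omega_0|_f$ and $\omega_1|_f$ are area forms, and since $J_i$ is $\omega_i$-compatible with $f$ being $J_i$-holomorphic, each $\omega_i|_f$ induces the $J_i$-complex orientation of $f$; these agree by hypothesis, so $\mu_t|_f$ is a positive area form for every $t$. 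I will argue the analogous statement on $D$ separately, as this is where the orientation hypotheses enter.

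First I record an orientation bookkeeping. Since the vertical bundle $V=\ker d\pi$ is $J_i$-invariant and both the total orientation $\omega_i^2>0$ and the fiber orientation agree for $i=0,1$, the quotient orientation on $TX/V$ transported by $d\pi$ gives a single well-defined orientation $\mathfrak{o}_B$ of $B$, the same for $i=0,1$. Now $\pi|_D$ is submersive with $\dim D=\dim B=2$, hence a local diffeomorphism, and $T_xD$ is a $J_i$-complex line transverse to $V_x$, so $T_xX=V_x\oplus T_xD$ is a splitting into $J_i$-complex subspaces. Since the complex orientation of a direct sum of complex subspaces is the sum of their complex orientations, the $J_i$-orientation of $D$ equals $(\pi|_D)^*\mathfrak{o}_B$; in particular the $J_0$- and $J_1$-orientations of $D$ coincide. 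Therefore $\omega_0|_D$ and $\omega_1|_D$ are area forms inducing the same orientation of $D$, and $\mu_t|_D$ is a positive area form for all $t$. Fix once and for all an area form $\omega_B$ on $B$ inducing $\mathfrak{o}_B$.

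Now put $\omega_t=\mu_t+K\,h(t)\,\pi^*\omega_B$, where $h\colon[0,1]\to[0,\infty)$ is smooth with $h(0)=h(1)=0$ and $h>0$ on $(0,1)$, and $K>0$ is to be chosen large; each $\omega_t$ is closed. Because $\pi^*\omega_B$ is pulled back from a surface, $(\pi^*\omega_B)^2=0$, so $\omega_t^2=\mu_t^2+2Kh(t)\,\mu_t\wedge\pi^*\omega_B$. Only the purely vertical part of $\mu_t$ survives the wedge with the horizontal form $\pi^*\omega_B$, and that part is the fiberwise-positive $\mu_t|_f$; combined with $\omega_B$ inducing $\mathfrak{o}_B$ this gives $\mu_t\wedge\pi^*\omega_B=c_t\,\mathrm{vol}$ with $c_t>0$ bounded below on the compact product $X\times[0,1]$. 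At the endpoints $h=0$ and $\omega_t^2=\omega_i^2>0$; by continuity $\mu_t^2$ stays positive on small intervals around $t=0,1$, while on any compact subinterval of $(0,1)$ the term $2Kh(t)c_t$ dominates the bounded $\mu_t^2$ once $K$ is large. Hence for $K$ large $\omega_t$ is symplectic for all $t$. Finally $\omega_t|_D=\mu_t|_D+Kh(t)(\pi|_D)^*\omega_B$ is a nonnegatively weighted sum of two area forms both inducing the common orientation of $D$, hence nondegenerate, so $D$ is symplectic throughout; and $h(0)=h(1)=0$ makes $\omega_0,\omega_1$ the endpoints, as required.

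The one genuinely delicate point is the compatibility of orientations: a priori the orientation of $B$ needed to make the Thurston sum nondegenerate on $X$ could differ from the one needed to keep $\mu_t|_D+Kh(t)(\pi|_D)^*\omega_B$ an area form, and then no single $\omega_B$ would work for both. The hypotheses that the fiber orientations and the total orientations of $\omega_0$ and $\omega_1$ agree are exactly what force these two orientations of $B$ to coincide, through the splitting of $TX$ into $J_i$-complex subspaces along $D$; securing this compatibility is the heart of the argument.
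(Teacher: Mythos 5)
Your proof is correct and uses the same Thurston-trick interpolation $\mu_t + K(\cdot)\,\pi^*\omega_B$ as the paper; the differences are in packaging rather than substance: you fold the deformation into a single family via the bump function $h(t)$ where the paper concatenates $\omega_0\to\omega_0^K\to\omega_1^K\to\omega_1$ and smooths, and you verify non-degeneracy through $\omega_t^2>0$ (valid since $\dim X=4$) where the paper verifies taming of $J_0$. Your explicit orientation bookkeeping --- that the hypotheses on the fiber and total orientations force a single orientation $\mathfrak{o}_B$ on $B$ which works simultaneously for $i=0,1$ and, via the $J_i$-complex splitting $T_xX=V_x\oplus T_xD$, also governs the orientation of $D$ --- is precisely the step the paper compresses into ``by possibly changing the sign of $\omega_B$,'' so your writeup is if anything more careful on the one delicate point.
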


\begin{proof}
Fix a point $p \in X$ and consider a non-zero tangent vector $v \in T_pX$  which does not lie in the vertical tangent sub-bundle $T_pX^{vert}$.
Since fibers are $J_i$ holomorphic, we have $Span\{v,J_iv\} \cap T_pX^{vert} =\{0\}$.
Choose a volume form (symplectic form) $\omega_B$ on $B$.
Since $\pi$ is a submersion, $\pi_* Span\{v,J_iv\} =T_{\pi(p)}B$.
Therefore, we have $\omega_B(\pi_*(v),\pi_*(J_iv)) \neq 0$.
By possibly changing the sign of $\omega_B$, we can assume $\omega_B(\pi_*(v),\pi_*(J_iv)) > 0$.
Moreover, this inequality is true for any $v \in T_pX$ not lying in $T_pX^{vert}$.
By continuity, $\omega_B(\pi_*(v),\pi_*(J_iv)) > 0$ for any $p \in X$ and any $v \in T_pX-T_pX^{vert}$ for both $i=0,1$.

Now, we apply the Thurston trick.
For any $K \ge 0$, we let $\omega_i^K=\omega_i + K \pi^* \omega_B$, which is clearly closed.
It is also non-degenerate because it is non-degenerate for the vertical tangent sub-bundle and for any $p \in X$, and any $v \in T_pX -T_pX^{vert}$,  we have
$\omega_i^K(v,J_i v)=\omega_i(v,J_iv)+K \omega_B(\pi_*(v),\pi_*(J_iv))>0$.
The first term being greater than zero is by compatibility and the second term being non-negative is due to $K \ge 0$ and the first paragraph.
Notice that $D$ is symplectic with respect to $\omega_i^K$ for both $i=0,1$ because $\pi|_D$ is submersive and $D$ is $J_i$-holomorphic.

Now, we consider $\omega^K_t=(1-t)\omega^K_0+t\omega^K_1$, which is clearly closed and non-degenerate for $TX^{vert}$.
For $v \in T_pX-T_pX^{vert}$, we have
$\omega^K_t(v,J_0v)=(1-t)\omega_0(v,J_0v)+t\omega_1(v,J_0v)+K\omega_B(\pi_*v,\pi_*J_0v)$.
We know that the first and the third terms on the right hand side are non-negative but we have no control on the second term.
However, there is a sufficiently large $K$ such that $\omega^K_t(v,J_0v) > 0$ for all $p \in X$ and $v \in T_pX -T_pX^{vert}$ and for all $t$ because
the sphere subbundle of $TX$ is compact.
By smoothening out the piecewise smooth family from $\omega_0$ to $\omega_0^K$, $\omega^K_t$ and from $\omega_1^K$ to $\omega_1$, we finish the proof.

\end{proof}

We remark that Lemma \ref{lem: Thurston trick} can be viewed as a relative version of Proposition 4.4 in \cite{Mc98} in dimension four.

\subsection{Proof of Theorem \ref{thm: symplectic deformation class=homology classes}}

We are ready to prove Theorem \ref{thm: symplectic deformation class=homology classes}.

\begin{proof}[Proof of Theorem \ref{thm: symplectic deformation class=homology classes}]



 Let $(X^i,D^i,\omega^i)$ be symplectic log Calabi-Yau surfaces for $i=1,2$, which are homological equivalent via a diffeomorphism $\Phi$.


Let $\{e_1, \dots, e_{\beta}\}$ be a maximal set of pairwisely orthogonal non-toric exceptional classes in $X$.
We can choose an almost complex structure $J^1$ (possibly after deforming $D^1$) such that $D^1$ is $J^1$-holomorphic and all $e_j$ has
embedded $J^1$-holomorphic representative, by Lemma \ref{lem: existence of nice J-sphere}.
Since $(X^1,D^1,\omega^1)$ and $(X^2,D^2,\omega^2)$ are homological equivalent via $\Phi$, $\{ \Phi_*(e_j)\}$ is a maximal set of pairwisely orthogonal non-toric exceptional classes.
We can find an $\omega^2$-tamed almost
complex structure (possibly after deforming $D^2$) $J^2$
such that $D^2$ is $J^2$-holomorphic and the $\Phi_*(e_j)$ has embedded $J^2$-holomorphic representative.
After blowing down  the $J^i$-holomorphic representatives of $e_j$, and $\Phi_*(e_j)$ for all $1 \le j \le \beta$,
we obtain two symplectic log CY surfaces $(\overline{X^1},\overline{D^1},\overline{\omega^1})$
and $(\overline{X^2},\overline{D^2},\overline{\omega^2})$.

Clearly, $(\overline{X^1},\overline{D^1},\overline{\omega^1})$
and $(\overline{X^2},\overline{D^2},\overline{\omega^2})$ are homological equivalent for some natural choice of diffeomorphism $\overline{\Phi}$.
Now, a component in $\overline{D^1}$ is exceptional if and only if the corresponding component in $\overline{D^2}$ is exceptional.
By Lemma \ref{lemma: fundamental theorem + minimal model},  we pass
to  minimal models $(\overline{X^i_b},\overline{D^i_b},\overline{\omega^i_b})$ by toric blow-downs. 
By identifying $\overline{X^1_b}$ and $\overline{X^2_b}$ with a natural choice of diffeomorphism $\overline{\Phi_b}$, the homology classes of the components of $\overline{D^1_b}$ and $\overline{D^2_b}$ are the same.

By Proposition 1.2.15 of \cite{McOp13} or Theorem 2.9 of \cite{DoLiWu14},  up to a D-symplectic homotopy (ie. a deformation of $\overline{\omega^2_b}$ keeping $\overline{D^2_b}$ symplectic),
we can assume $[\overline{\omega^1_b}]=\overline{\Phi_b}^*[\overline{\omega^2_b}]$.
Therefore, $\overline{X^1_b}$ and $\overline{X^2_b}$ are actually symplectomorphic (\cite{Ta95}, \cite{LaMc96-2})
and we thus can choose $\overline{\Phi_b}$ to be a symplectomorphism from $(\overline{X^1_b},\overline{\Phi_b}^{-1}(\overline{D^2_b}),\overline{\omega^1_b})$ to $(\overline{X^2_b},\overline{D^2_b},\overline{\omega^2_b})$.
Therefore, we conclude that
$(\overline{X^1_b},\overline{D^1_b},\overline{\omega^1_b})$ and $(\overline{X^2_b},\overline{D^2_b},\overline{\omega^2_b})$
 are symplectic deformation equivalent, by applying Proposition \ref{prop: isotopy class in minimal model} to $(\overline{X^1_b},\overline{D^1_b},\overline{\omega^1_b})$ and $(\overline{X^1_b},\overline{\Phi_b}^{-1}(\overline{D^2_b}),\overline{\omega^1_b})$.
 Further, by Lemma \ref{lem: as a marked divisor},  they are  $D-$symplectic deformation equivalent.

Now we record the sequence of  non-toric and toric blow-downs by markings   $\overline{D^1_b}$ and $\overline{D^2_b}$.
 As marked divisors, they are
$D-$symplectic deformation equivalent by Lemma \ref{lem: marking adding/shrinking}.
Finally, by Proposition \ref{prop: marked blowdown} (and viewing unmarked divisors as marked divisors without markings), $(X^1,D^1,\omega^1)$ is $D-$symplectic deformation equivalent to $(X^2,D^2,\omega^2)$, and hence
symplectic deformation equivalent to $(X^2,D^2,\omega^2)$ by Lemma \ref{lem: as a marked divisor}.
Tracing the steps, we see that the symplectomorphism in the symplectic deformation equivalence between $(X^1,D^1,\omega^1)$ and $(X^2,D^2,\omega^2)$
has the same homological effect as $\Phi$.



Now, assume $(X^1,D^1,\omega^1)$ is {\it strictly} homological equivalent to $(X^2,D^2,\omega^2)$ via a diffeomorphism $\Phi$.
It means that $\Phi$ is a homological equivalence and $\Phi^*[\omega^2]=[\omega^1]$.
We first note that, up to symplectic isotopy of $D^1$ and $D^2$, which preserves the strict D-symplectic deformation class (Lemma \ref{lem: perturbation}),
we can assume $D^i$ are $\omega^i$-orthogonal.
We have shown that there is a $D-$symplectic homotopy $(X^1,D^1,\omega^1_t)$ of $(X^1,D^1,\omega^1)$
and a symplectomophism $\Psi: (X^1,D^1,\omega^1_1) \to (X^2,D^2,\omega^2)$ with the same homological effect as $\Phi$.
Therefore, we have $[\omega^1]=\Phi^*[\omega^2]=\Psi^*[\omega^2]=[\omega^1_1]$.
By Theorem 1.2.12 of \cite{McOp13}, $\omega^1_t$ can be chosen such that $[\omega^1_t]$ is constant for all $t$.
By Corollary 1.2.13 of \cite{McOp13}, there is a symplectic isotopy $(X^1,D^1_t,\omega^1)$ such that $D^1_0=D^1$ and $(X^1,D^1_1,\omega^1)$ is symplectomorphic to
$(X^1,D^1,\omega^1_1)$ and hence to $(X^2,D^2,\omega^2)$.
Therefore, the result follows.

\end{proof}

In the case $X^1=X^2=X$, Theorem \ref{thm: symplectic deformation class=homology classes} implies the symplectic deformation class of $(X,D,\omega)$ is uniquely determined by the homology classes $\{[C_j]\}_{j=1}^k$ modulo the action of diffeomorphism on $H_2(X,\mathbb{Z})$.
The fact the the homology classes of $D$ completely determine the symplectic deformation equivalent class can be regarded as in the same spirit of Torelli type theorems in a weak sense.

If $(X^1,\omega^1)=(X^2,\omega^2)=(X, \omega)$, we can take the strict homological equivalence to be identity and hence
the symplectomorphism from $(X,D^1,\omega)$ to the time-one end of the symplectic isotopy of $(X,D^2,\omega)$
in Theorem \ref{thm: symplectic deformation class=homology classes} has trivial homological action.
Therefore, the number of symplectic isotopy classes of homological equivalent log Calabi-Yau surfaces in $(X,\omega)$ is bounded above by the
number of connected components of Torelli part of the symplectomorphism group of $(X,\omega)$.


\end{document}